\newcommand{\Z}{\mathbb{Z}}
\newcommand{\Q}{\mathbb{Q}}
\newcommand{\C}{\mathbb{C}}
\newcommand{\rad}{\operatorname{rad}}
\newcommand{\m}{\operatorname{m}} 
\newcommand{\dlm}{\operatorname{d}} 
\newcommand{\rank}{\operatorname{rank}}
\newcommand{\level}{\operatorname{level}}
\def\bigquotient#1#2{%
    \left.\raise1ex\hbox{$#1$}\middle/\lower1ex\hbox{$#2$}\right.%
}
\def\quotient#1#2{%
    \left.\raise0.3ex\hbox{$#1$} \middle/\lower0.3ex\hbox{$#2$}\right.%
}
\newtheoremstyle{thme}
{2}						
{2}						
{\itshape}				
{}						
{\bfseries}				
{}						
{\newline}				
{}						
\newtheoremstyle{rem}	
{2}						
{2}						
{\upshape}				
{}						
{\bfseries}				
{}						
{\newline}				
{}						
\theoremstyle{thme}
\newtheorem{thmintro}{Theorem}
\newtheorem{propintro}[thmintro]{Proposition} 
\newtheorem{thm}{Theorem}[section]
\newtheorem*{thm*}{Theorem}
\newtheorem{corol}[thm]{Corollary}
\newtheorem{lem}[thm]{Lemma}
\theoremstyle{rem}
\newtheorem*{remarkintro}{Remark}
\newtheorem*{remarksintro}{Remarks}
\newtheorem{remark}[thm]{Remark}
\newtheorem{defin}[thm]{Definition}
\begin{document}    


\title[On the computation of weight multiplicities]{An algorithm for computing weight multiplicities in irreducible modules for complex semisimple Lie algebras}     
\author{Mika\"el Cavallin}
\address{Fachbereich Mathematik, Postfach 3049, 67653 Kaiserslautern, Germany.} 
\email{cavallin.mikael@gmail.com} 
\thanks{The author would like to acknowledge the support of the Swiss National Science Foundation through grants no. 20020-135144 as well as the ERC Advanced Grant through grants no. 291512.}

\begin{abstract}    
Let $\mathfrak{g}$ be a finite-dimensional semisimple Lie algebra over $\mathbb{C}$ having rank $l$ and let $V$ be an irreducible finite-dimensional $\mathfrak{g}$-module having highest weight $\lambda.$ Computations of weight multiplicities in $V,$ usually based on Freudenthal's formula, are in general difficult to carry out in large ranks or for $\lambda$ with large coefficients (in terms of the fundamental weights). In this paper, we first show that in some situations, these coefficients can be ``lowered'' in order to simplify the calculations. We then investigate how this can be used to improve the aforementioned formula of Freudenthal, leading to a more efficient version of the latter in terms of complexity as well as to a way of dealing with certain computations in unbounded ranks. We conclude by illustrating the last assertion with a concrete example.
\end{abstract}

\maketitle


\section{Introduction}    

Let $\mathfrak{g}$ be a finite-dimensional semisimple Lie algebra over $\mathbb{C}$  with Cartan subalgebra $\mathfrak{h}.$ Set $l=\rank \mathfrak{g}$ and fix an  ordered base $\Pi=\{\alpha_1,\ldots,\alpha_l\}$ of the corresponding root system $\Phi=\Phi^+\sqcup \Phi^-$ of $\mathfrak{g},$ where $\Phi^+$ and $\Phi^-$ denote the sets of positive and negative roots of $\Phi,$ respectively. Also let $\lambda_1,\ldots,\lambda_l$ denote the so-called fundamental weights corresponding to our choice of base $\Pi$ and write $\Lambda =\mathbb{Z}\lambda_1+\cdots+\mathbb{Z}\lambda_l $ for the  associated integral weight lattice. Finally, let $\Lambda^+$ denote the set of dominant integral weights and recall the existence of a partial order on $\Lambda$, defined by $\mu\preccurlyeq \lambda$ if and only if $\lambda-\mu \in \Gamma,$ where $\Gamma \subset \Lambda$ is the monoid of $\mathbb{Z}_{\geqslant 0}$-linear combinations of simple roots.
\vspace*{5mm}

It is well-known that the set of isomorphism classes of irreducible finite-dimensional $\mathfrak{g}$-modules is in one-to-one correspondence with the set $\Lambda^+$ of dominant integral weights. Furthermore, a class representative $L(\lambda)$ corresponding to a given weight $\lambda\in \Lambda^+ $  can be constructed as  the quotient of the so-called \emph{Verma module of weight $\lambda,$} written $\Delta(\lambda),$ by its unique maximal submodule $\rad(\lambda),$ that is 
$$
L(\lambda)=\quotient{\Delta(\lambda)}{\rad(\lambda)}.
$$ 
Even though infinite-dimensional, Verma modules are $\mathfrak{h}$-semisimple, i.e., can be decomposed as  direct sums of their weight spaces. Moreover, such decompositions are well understood: for a given dominant integral weight $\lambda\in \Lambda^+$ and any integral weight $\mu\in \Lambda,$ a basis for the weight space in $\Delta(\lambda)$ corresponding to $\mu$ is known (see \eqref{B(lambda)_mu} in Section \ref{Subsection_on_Verma_modules} below) and hence so is the multiplicity $\m_{\Delta(\lambda)}(\mu)$ of $\mu$ in $\Delta(\lambda).$ In addition, one gets that the set $\Lambda(\Delta(\lambda))$ of weights of $\Delta(\lambda)$ simply  consists of all $\mu\in \Lambda$ such that $\mu\preccurlyeq \lambda.$
\vspace{5mm} 

Unfortunately, not that much can be said about weight spaces in $L(\lambda)$ for an arbitrary dominant integral weight  $\lambda \in \Lambda^+.$ Firstly,  finding out if a given weight $\mu\prec \lambda$ belongs to the set $\Lambda(\lambda)$ of weights of $L(\lambda)$ is far from being immediate, as it generally requires one to determine the unique dominant  integral weight to which $\mu$ is conjugate (under the action of the Weyl group of $\mathfrak{g}$). Moreover, an explicit  description of the (often very large) set  $\Lambda^+(\lambda)=\Lambda(\lambda)\cap \Lambda^+$ for $\lambda \in \Lambda^+$ with large coefficients (when written as a $\mathbb{Z}$-linear combination of fundamental weights) is usually hard to come by (see \cite{MP} for a recursive method).  The first result in this paper shows that under certain assumptions on $\lambda \in \Lambda^+$ and $\mu\in \Lambda,$ the multiplicity of $\mu$ in $L(\lambda)$ is the same as the multiplicity of $\mu'$ in $L(\lambda'),$ where $\lambda'$ is a dominant integral weight whose coefficients (again, when written as a $\mathbb{Z}$-linear combination of fundamental weights) are smaller than or equal to those of $\lambda,$ and $\mu'\in \Lambda$ is the unique integral weight satisfying $\lambda'-\mu'=\lambda-\mu.$ The proof essentially relies on the existence of an explicit description of the maximal submodule $\rad(\lambda)$ of $\Delta(\lambda)$   (see \cite[Section 2.6]{Humphreys2} or Theorem \ref{A_description_of_N(lambda)} below).

\begin{propintro}\label{Main_Result_1}
Let $\lambda =\sum_{r=1}^l{a_r\lambda_r}\in \Lambda^+$ be a dominant integral weight and let $\mu\in \Lambda$ be such that $\mu=\lambda-\sum_{r=1}^l{c_r \alpha_r}$ for some $c_1,\ldots,c_l\in \mathbb{Z}_{\geqslant 0},$ so that $\mu\preccurlyeq \lambda.$ Also assume the existence of a non-empty subset $J$ of $\{1,\ldots,l\}$ such that $ 0 \leqslant c_j\leqslant a_j$ for every $j\in J$ and set $\lambda'=\lambda + \sum_{j\in J}{(c_j-a_j)\lambda_j},$ $\mu'=\lambda'-(\lambda-\mu).$ Then
$$
\m_{L(\lambda)}(\mu)=\m_{L(\lambda')}(\mu').
$$
\end{propintro}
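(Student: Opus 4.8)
The plan is to compare weight multiplicities in the Verma modules $\Delta(\lambda),\Delta(\lambda')$ and in their maximal submodules $\rad(\lambda),\rad(\lambda')$, and to read off the desired equality from the explicit description of these submodules. Throughout, set $\eta=\lambda-\mu=\sum_{r=1}^l c_r\alpha_r\in\Gamma$; since $\mu'=\lambda'-(\lambda-\mu)$ one also has $\lambda'-\mu'=\eta$. Note first that $\lambda'\in\Lambda^+$: for $j\in J$ its $\lambda_j$-coefficient is $a_j+(c_j-a_j)=c_j\geqslant 0$, and its remaining coefficients agree with those of $\lambda$; in particular $\langle\lambda',\alpha_j^\vee\rangle=c_j$ for $j\in J$ and $\langle\lambda',\alpha_i^\vee\rangle=a_i$ for $i\notin J$. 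Fix Chevalley generators $e_1,f_1,\ldots,e_l,f_l$ of $\mathfrak{g}$, and for a dominant integral weight $\nu$ write $v_\nu$ for a highest weight vector of $\Delta(\nu)$.

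First I would use that $L(\nu)=\Delta(\nu)/\rad(\nu)$, so that $\m_{L(\nu)}(\xi)=\m_{\Delta(\nu)}(\xi)-\m_{\rad(\nu)}(\xi)$ for every weight $\xi$. By the explicit basis \eqref{B(lambda)_mu} of the weight spaces of a Verma module, $\m_{\Delta(\nu)}(\nu-\zeta)$ is Kostant's partition function evaluated at $\zeta$, hence independent of $\nu$; in particular $\m_{\Delta(\lambda)}(\mu)=\m_{\Delta(\lambda')}(\mu')$. Thus the proposition is equivalent to the identity $\m_{\rad(\lambda)}(\mu)=\m_{\rad(\lambda')}(\mu')$.

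To establish this I would invoke the description $\rad(\nu)=\sum_{i=1}^l U(\mathfrak{g})\,f_i^{\langle\nu,\alpha_i^\vee\rangle+1}v_\nu$ from Theorem~\ref{A_description_of_N(lambda)}. For $\nu$ dominant integral each $f_i^{\langle\nu,\alpha_i^\vee\rangle+1}v_\nu$ is a maximal vector, so $U(\mathfrak{g})\,f_i^{\langle\nu,\alpha_i^\vee\rangle+1}v_\nu=U(\mathfrak{n}^-)\,f_i^{\langle\nu,\alpha_i^\vee\rangle+1}v_\nu$. Since $\Delta(\nu)$ is free of rank one over $U(\mathfrak{n}^-)$ on $v_\nu$, the map $u\mapsto uv_\nu$ is an isomorphism $U(\mathfrak{n}^-)\to\Delta(\nu)$ taking the $\Gamma$-grading of $U(\mathfrak{n}^-)$ to the weight grading of $\Delta(\nu)$ up to the shift $\zeta\mapsto\nu-\zeta$, and it sends $\rad(\nu)$ onto the left ideal $I(\nu):=\sum_{i=1}^l U(\mathfrak{n}^-)f_i^{\langle\nu,\alpha_i^\vee\rangle+1}$. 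Hence $\m_{\rad(\nu)}(\nu-\zeta)=\dim I(\nu)_{-\zeta}$, a number depending on $\nu$ only through the Cartan integers $\langle\nu,\alpha_i^\vee\rangle$.

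The last step — and the one place where the hypothesis $c_j\leqslant a_j$ enters — is a weight-degree count showing that in degree $-\eta$ the summands indexed by $j\in J$ drop out on both sides. Indeed, a nonzero element of $U(\mathfrak{n}^-)f_j^{\,a_j+1}$ of weight $-\eta$ would force $\eta-(a_j+1)\alpha_j\in\Gamma$, yet the $\alpha_j$-coefficient of $\eta-(a_j+1)\alpha_j$ equals $c_j-a_j-1<0$; likewise the $\alpha_j$-coefficient of $\eta-(\langle\lambda',\alpha_j^\vee\rangle+1)\alpha_j=\eta-(c_j+1)\alpha_j$ equals $-1<0$. Therefore
\[
I(\lambda)_{-\eta}=\Big(\sum_{i\notin J}U(\mathfrak{n}^-)f_i^{\,a_i+1}\Big)_{-\eta}=I(\lambda')_{-\eta},
\]
the last equality because $\langle\lambda',\alpha_i^\vee\rangle=a_i$ for $i\notin J$. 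Comparing dimensions gives $\m_{\rad(\lambda)}(\mu)=\m_{\rad(\lambda')}(\mu')$, and combined with the first step this proves the proposition. I expect the only genuine subtlety to be this bookkeeping: recognizing that $\rad(\nu)$ corresponds to a left ideal of $U(\mathfrak{n}^-)$ insensitive to $\nu$ apart from the Cartan integers, and observing that one need not evaluate the dimension of the (non-direct) sum $I(\nu)_{-\eta}$ at all — it suffices that the ``$J$-summands'' are invisible in the relevant weight, which is exactly what $c_j\leqslant a_j$ guarantees.
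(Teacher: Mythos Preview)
Your proof is correct and rests on the same core observation as the paper: via the $U(\mathfrak{n}^-)$-freeness of $\Delta(\nu)$, the weight space $\rad(\nu)_{\nu-\eta}$ is the degree-$(-\eta)$ piece of the left ideal $\sum_i U(\mathfrak{n}^-)f_i^{\langle\nu,\alpha_i^\vee\rangle+1}$, and the summands indexed by $j\in J$ vanish there because $c_j\leqslant a_j$ (respectively $c_j\leqslant c_j$) forces a negative $\alpha_j$-coefficient, while the remaining summands for $i\notin J$ are literally the same ideal on both sides.

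The paper packages this differently: it treats one index $j$ at a time (Corollary~\ref{A_first_simplification}) and then inducts on $|J|$, and in the single-index step it builds an explicit linear isomorphism $\phi:\Delta(\lambda)_\mu\to\Delta(\lambda_{j,x})_{\mu^{\lambda_{j,x}}}$ on PBW bases and checks, via Remark~\ref{Remark_on_weight_spaces_in_Verma_modules}, that $\phi$ carries each summand $(U(\mathfrak{g})y_r^{a_r+1}v^\lambda)_\mu$ onto its counterpart. Your identification of both Verma modules with $U(\mathfrak{n}^-)$ makes that explicit map and the induction unnecessary and handles all of $J$ in one stroke; the paper's version, on the other hand, isolates the ``one coefficient at a time'' statement (Corollary~\ref{A_first_simplification}), which it later reuses when varying a single $a_j$ as a parameter in the proof of Theorem~\ref{New_Freudenthal's_formula}.
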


While Proposition \ref{Main_Result_1} can sometimes allow one to study   weight spaces in smaller modules than those initially considered, an effective method to compute weight multiplicities in most irreducibles is still needed. As mentioned in the abstract of this paper, this can be accomplished by applying the well-known formula of Freudenthal (\cite{Freudenthal(1)}), for example. We refer the reader to \cite[Theorem 22.3]{Humphreys1} for a proof of the following.

\begin{thm*}[Freudenthal's Formula] 
Let $\lambda\in \Lambda^+$ be a dominant integral weight and let $\mu\in \Lambda.$ Also set $\dlm(\lambda,\mu)=2(\lambda+\rho,\lambda-\mu)-||\lambda-\mu||^2,$ where $\rho$ denotes the sum of all fundamental weights and $(-,-)$ is the usual inner product on $\Lambda.$ Then the multiplicity of $\mu$ in $L(\lambda)$ is given recursively by 
$$
\dlm(\lambda,\mu) \m_{L(\lambda)}(\mu)= 2 
 \sum_{r=1}^{\infty}{\sum_{\alpha\in \Phi^+}{\m_{L(\lambda)}(\mu+r\alpha) (\mu+r\alpha,\alpha)} }.
 $$
\end{thm*}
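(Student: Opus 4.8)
The plan is to compute the trace of the Casimir operator of $\mathfrak{g}$ on a single weight space $L(\lambda)_\mu$ of $L(\lambda)$ in two different ways and to compare the outcomes. Fix an invariant symmetric non-degenerate bilinear form $\kappa$ on $\mathfrak{g}$ whose restriction to $\mathfrak{h}$ induces the inner product $(-,-)$ on $\mathfrak{h}^*\supseteq\Lambda$, choose bases $\{h_i\}$ and $\{k_i\}$ of $\mathfrak{h}$ dual with respect to $\kappa$, and for each $\alpha\in\Phi^+$ pick root vectors $e_\alpha\in\mathfrak{g}_\alpha$, $f_\alpha\in\mathfrak{g}_{-\alpha}$ with $\kappa(e_\alpha,f_\alpha)=1$; then the Casimir element is $c=\sum_i h_ik_i+\sum_{\alpha\in\Phi^+}(e_\alpha f_\alpha+f_\alpha e_\alpha)\in U(\mathfrak{g})$. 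Write $m(\nu)=\m_{L(\lambda)}(\nu)$ for brevity. Since $c$ is central in $U(\mathfrak{g})$ and $L(\lambda)$ is a finite-dimensional irreducible module, Schur's lemma shows that $c$ acts on $L(\lambda)$ as a scalar; evaluating on a highest weight vector, which is annihilated by every $e_\alpha$, one finds this scalar to be $(\lambda+\rho,\lambda+\rho)-(\rho,\rho)=(\lambda,\lambda+2\rho)$. Hence $\operatorname{tr}(c|_{L(\lambda)_\mu})=(\lambda,\lambda+2\rho)\,m(\mu)$.

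Next I would compute the same trace term by term. On $L(\lambda)_\mu$ the Cartan part $\sum_i h_ik_i$ acts by the scalar $(\mu,\mu)$, contributing $(\mu,\mu)\,m(\mu)$. For the root part, write $[e_\alpha,f_\alpha]=t_\alpha$ with $t_\alpha\in\mathfrak{h}$ the element satisfying $\kappa(t_\alpha,h)=\alpha(h)$, so that $e_\alpha f_\alpha=f_\alpha e_\alpha+t_\alpha$; since $t_\alpha$ acts on $L(\lambda)_\mu$ by $(\mu,\alpha)$ and $\sum_{\alpha\in\Phi^+}\alpha=2\rho$, the $t_\alpha$-contributions add up to $2(\rho,\mu)\,m(\mu)$, leaving $2\sum_{\alpha\in\Phi^+}\operatorname{tr}(f_\alpha e_\alpha|_{L(\lambda)_\mu})$. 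Comparing with the first computation,
$$
(\lambda,\lambda+2\rho)\,m(\mu)=((\mu,\mu)+2(\rho,\mu))\,m(\mu)+2\sum_{\alpha\in\Phi^+}\operatorname{tr}(f_\alpha e_\alpha|_{L(\lambda)_\mu}).
$$

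The crux is to evaluate $\operatorname{tr}(f_\alpha e_\alpha|_{L(\lambda)_\mu})$ by telescoping along the $\alpha$-string through $\mu$. Since $e_\alpha$ sends $L(\lambda)_\nu$ into $L(\lambda)_{\nu+\alpha}$ and $f_\alpha$ sends it back, the identity $\operatorname{tr}(AB)=\operatorname{tr}(BA)$ gives $\operatorname{tr}(f_\alpha e_\alpha|_{L(\lambda)_\nu})=\operatorname{tr}(e_\alpha f_\alpha|_{L(\lambda)_{\nu+\alpha}})$, and combining this with $e_\alpha f_\alpha=f_\alpha e_\alpha+t_\alpha$ yields the recursion
$$
\operatorname{tr}(f_\alpha e_\alpha|_{L(\lambda)_\nu})-\operatorname{tr}(f_\alpha e_\alpha|_{L(\lambda)_{\nu+\alpha}})=(\nu+\alpha,\alpha)\,m(\nu+\alpha).
$$
Because $L(\lambda)$ is finite-dimensional, $L(\lambda)_{\mu+r\alpha}=0$ for all sufficiently large $r$, so summing this recursion from $\nu=\mu$ upwards and telescoping produces $\operatorname{tr}(f_\alpha e_\alpha|_{L(\lambda)_\mu})=\sum_{r\geqslant 1}(\mu+r\alpha,\alpha)\,m(\mu+r\alpha)$. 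Substituting this into the displayed identity and rearranging gives
$$
[\,(\lambda,\lambda+2\rho)-(\mu,\mu)-2(\rho,\mu)\,]\,m(\mu)=2\sum_{\alpha\in\Phi^+}\sum_{r\geqslant 1}(\mu+r\alpha,\alpha)\,m(\mu+r\alpha),
$$
and one concludes via the direct expansion $(\lambda,\lambda+2\rho)-(\mu,\mu)-2(\rho,\mu)=2(\lambda+\rho,\lambda-\mu)-\|\lambda-\mu\|^2=\dlm(\lambda,\mu)$.

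The main obstacle is the telescoping step: one must check carefully that the traces of $f_\alpha e_\alpha$ over the weight spaces along an $\alpha$-string cancel in consecutive pairs — which rests on $\operatorname{tr}(AB)=\operatorname{tr}(BA)$ together with the fact that the relevant composites are endomorphisms of finite-dimensional weight spaces — and that the string terminates, which is exactly where finite-dimensionality of $L(\lambda)$ enters. A secondary point needing attention is the bookkeeping of normalizations: the choice of $\kappa$, the identification of $t_\alpha$ with $\alpha$ under it, the action of $\sum_i h_ik_i$ by $(\mu,\mu)$, and the identity $\sum_{\alpha\in\Phi^+}\alpha=2\rho$, so that the scalar by which $c$ acts comes out to exactly $(\lambda,\lambda+2\rho)$ and the final coefficient of $m(\mu)$ matches $\dlm(\lambda,\mu)$ verbatim.
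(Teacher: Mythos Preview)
Your proof is correct and is precisely the standard Casimir-element argument. The paper does not supply its own proof of Freudenthal's formula but simply refers the reader to \cite[Theorem~22.3]{Humphreys1}, where exactly this trace computation is carried out.
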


The recursive nature of Freudenthal's formula makes the latter quite demanding in terms of complexity, especially in unbounded rank, due to the quadratic growth of $|\Phi^+|$ as $l\rightarrow \infty.$ However, it is still more efficient than the recursive method of Racah (\cite[Section 8.11]{Racah}) or the closed formula provided by Kostant (\cite{Kostant}), for example. (Indeed, both involve a summation over all elements in the Weyl group, which becomes very cumbersome as the rank of $\mathfrak{g}$ grows.) Furthermore, various authors have been studying ways of improving the efficiency of Freudenthal's formula over the past decades, like Moody and Patera (\cite{MP}) for example,  who developed an algorithm allowing faster computation of multiplicities. If interested in more recent formulas, we refer the reader to \cite{Lusztig}, \cite{Sahi}, or  \cite{LePa}. (The latter describes a closed formula in the special case where $\mathfrak{g}$ is a simple Lie algebra of type $C_2$ over $\C.$)

The second result of this paper consists in another modification of the aforementioned formula of Freudenthal, applicable under certain conditions on $\lambda$ and $\mu.$ For $1\leqslant j\leqslant l$ and $\alpha=\sum_{r=1}^l{d_r\alpha_r} \in \Gamma,$ define the \emph{$j$-level of $\alpha$} by $\level_j(\alpha) = d_j$ and set 
$$
\Phi^+_j=  \{\alpha \in \Phi^+ : \level_j(\alpha)>0\}.
$$

Observe that a positive root $\alpha \in \Phi^+$ belongs to $\Phi^+_j$ if and only if  $\alpha_j$ appears in the decomposition of $\alpha$ as a sum of simple roots. Also, it is clear that $0\leqslant \level_j(\alpha)\leqslant 6$ for every $\alpha\in \Phi^+$ and finally, if $\mathfrak{g}$ is of classical type (i.e. of type $A,$ $B,$ $C$ or $D$), then $0\leqslant \level_j(\alpha)\leqslant 2$ for every $1\leqslant j\leqslant l$ and $\alpha \in \Phi^+.$

\begin{thmintro}\label{New_Freudenthal's_formula} 
Let $\lambda =\sum_{r=1}^l{a_r\lambda_r}\in \Lambda^+$ be a dominant integral weight and let $\mu\in \Lambda$ be such that $\mu=\lambda-\sum_{r=1}^l{c_r\alpha_r}$ for some $c_1,\ldots,c_l\in \mathbb{Z}_{\geqslant 0}.$  Also assume the existence of $1\leqslant j\leqslant l$ such that $0<\level_j(\lambda-\mu)\leqslant a_j$ (or equivalently, such that $0<c_j\leqslant a_j)$. Then  
$$
\m_{L(\lambda)}(\mu)=\frac{1}{c_j}\sum_{ r=1}^{c_j}{\sum_{\alpha\in \Phi^+_j}{ \level_j(\alpha)}\m_{L(\lambda)}(\mu+r\alpha)}.
$$ 
\end{thmintro}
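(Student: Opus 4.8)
The plan is to deduce the identity from the analogous --- but \emph{unconditional} --- statement for the maximal parabolic Verma module attached to the node $j$, the hypothesis $c_j\leqslant a_j$ being needed only at the end, to pass from that module down to $L(\lambda)$. To set up, let $\mathfrak{p}_j=\mathfrak{l}_j\oplus\mathfrak{u}_j^+$ be the standard parabolic subalgebra of $\mathfrak{g}$ whose Levi factor $\mathfrak{l}_j$ has $\{\alpha_i:i\neq j\}$ as a base, so that $\mathfrak{u}_j^+$ (resp.\ the opposite nilradical $\mathfrak{u}_j^-$) is spanned by the root vectors $e_\alpha$ (resp.\ $f_\alpha$) with $\alpha\in\Phi^+_j$. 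Fix a highest weight vector $v^+$ of $\Delta(\lambda)$, let $L_{\mathfrak{l}_j}(\lambda)$ be the finite-dimensional irreducible $\mathfrak{l}_j$-module of highest weight $\lambda$, and let $\Delta_{\mathfrak{p}_j}(\lambda)=U(\mathfrak{g})\otimes_{U(\mathfrak{p}_j)}L_{\mathfrak{l}_j}(\lambda)$ be the corresponding parabolic Verma module; recall that it is the quotient of $\Delta(\lambda)$ by the submodule generated by the singular vectors $f_i^{a_i+1}v^+$ for $i\neq j$, and that, by the Poincar\'e--Birkhoff--Witt theorem, $\Delta_{\mathfrak{p}_j}(\lambda)\cong U(\mathfrak{u}_j^-)\otimes_{\mathbb{C}}L_{\mathfrak{l}_j}(\lambda)$ as $\mathfrak{h}$-modules (see \cite[\S 9]{Humphreys2}). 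Writing $P_j(\gamma)$ for the number of ways of expressing $\gamma\in\Gamma$ as a $\mathbb{Z}_{\geqslant 0}$-linear combination of roots in $\Phi^+_j$, this gives $\m_{\Delta_{\mathfrak{p}_j}(\lambda)}(\mu)=\sum_{\nu}{\m_{L_{\mathfrak{l}_j}(\lambda)}(\nu)\,P_j(\nu-\mu)}$ for every $\mu\in\Lambda$. The key observation is that every weight $\nu$ of $L_{\mathfrak{l}_j}(\lambda)$ has $\level_j(\lambda-\nu)=0$, so whenever $P_j(\nu-\mu)\neq 0$ we have $\level_j(\nu-\mu)=\level_j(\lambda-\mu)=c_j$, and in \emph{every} decomposition $\nu-\mu=\sum_{\alpha\in\Phi^+_j}{k_\alpha\alpha}$ one gets $c_j=\sum_{\alpha\in\Phi^+_j}{k_\alpha\level_j(\alpha)}$.

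First I would prove the asserted identity with $\Delta_{\mathfrak{p}_j}(\lambda)$ replacing $L(\lambda)$, for \emph{every} $\mu$ with $c_j>0$. This reduces to the purely combinatorial identity
$$
c_j\,P_j(\gamma)=\sum_{\alpha\in\Phi^+_j}{\level_j(\alpha)\sum_{r\geqslant 1}{P_j(\gamma-r\alpha)}}\qquad(\gamma\in\Gamma,\ \level_j(\gamma)=c_j),
$$
obtained by double counting: one sums the equality $c_j=\sum_{\alpha}{k_\alpha\level_j(\alpha)}$ over all decompositions of $\gamma$, then writes $k_\alpha=\sum_{r=1}^{k_\alpha}{1}$ and uses the bijection between decompositions of $\gamma$ having $k_\alpha\geqslant r$ and decompositions of $\gamma-r\alpha$. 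Multiplying this identity (with $\gamma=\nu-\mu$) by $\m_{L_{\mathfrak{l}_j}(\lambda)}(\nu)$, summing over $\nu$, and using the formula for $\m_{\Delta_{\mathfrak{p}_j}(\lambda)}$ displayed above yields
$$
c_j\,\m_{\Delta_{\mathfrak{p}_j}(\lambda)}(\mu)=\sum_{r\geqslant 1}{\sum_{\alpha\in\Phi^+_j}{\level_j(\alpha)\,\m_{\Delta_{\mathfrak{p}_j}(\lambda)}(\mu+r\alpha)}}.
$$

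Then I would transfer this to $L(\lambda)$, which is where the hypothesis $c_j\leqslant a_j$ enters. By the description of $\rad(\lambda)$ (Theorem \ref{A_description_of_N(lambda)}) one has $\rad(\lambda)=\sum_{i=1}^{l}{U(\mathfrak{g})f_i^{a_i+1}v^+}$, so $L(\lambda)$ is a quotient of $\Delta_{\mathfrak{p}_j}(\lambda)$ and the kernel $K$ of $\Delta_{\mathfrak{p}_j}(\lambda)\twoheadrightarrow L(\lambda)$ is generated by the image of $f_j^{a_j+1}v^+$; thus $K$ is a highest weight module of highest weight $\lambda-(a_j+1)\alpha_j$ (or zero), and every weight $\xi$ of $K$ satisfies $\level_j(\lambda-\xi)\geqslant a_j+1$. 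Assuming now $0<c_j\leqslant a_j$, we get $\level_j(\lambda-\mu)=c_j<a_j+1$ and $\level_j(\lambda-(\mu+r\alpha))=c_j-r\level_j(\alpha)\leqslant c_j-r<a_j+1$ for $\alpha\in\Phi^+_j$ and $r\geqslant 1$, so none of $\mu$ and the $\mu+r\alpha$ is a weight of $K$; hence $\m_{L(\lambda)}$ agrees with $\m_{\Delta_{\mathfrak{p}_j}(\lambda)}$ at all of them. The last displayed identity then holds verbatim with $L(\lambda)$ in place of $\Delta_{\mathfrak{p}_j}(\lambda)$, and since $\m_{L(\lambda)}(\mu+r\alpha)=0$ once $r>c_j$ (for then $\mu+r\alpha\not\preccurlyeq\lambda$) the sum over $r$ may be truncated at $c_j$; dividing by $c_j>0$ gives the claim.

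The only substantial point is the decision to route the argument through $\Delta_{\mathfrak{p}_j}(\lambda)$: this is what makes the coefficient $c_j$ on the left-hand side visible as the total $j$-level ``used up'' by a $\Phi^+_j$-decomposition, precisely because in $\Delta_{\mathfrak{p}_j}(\lambda)$ the $\mathfrak{l}_j$-directions carry no $j$-level. A direct Freudenthal-type trace argument built from the roots of $\Phi^+_j$ does \emph{not} produce the clean weights $\level_j(\alpha)$ --- one is led instead to inner-product weights $(\mu+r\alpha,\alpha)$, which is why the detour through the parabolic Verma module is needed. Everything after the parabolic-Verma identity is formal bookkeeping forced by the numerical gap between $c_j\leqslant a_j$ and $a_j+1$.
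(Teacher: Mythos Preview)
Your proof is correct and takes a genuinely different route from the paper's. The paper does not introduce parabolic Verma modules at all; instead it leans on Proposition~\ref{Main_Result_1} to identify $\m_{L(\lambda)}(\mu)$ with $\m_{L(\lambda_{j,x})}(\mu^{\lambda_{j,x}})$ for every $x\geqslant c_j$, applies the classical Freudenthal formula to each $L(\lambda_{j,x})$, and shows that both the Freudenthal ``denominator'' $\dlm(\lambda_{j,x},\mu^{\lambda_{j,x}})$ and the Freudenthal ``numerator'' are linear polynomials in $x$ with leading coefficients $2c_j(\lambda_j,\alpha_j)$ and $2(\lambda_j,\alpha_j)\sum_{r,\alpha}\level_j(\alpha)\,\m_{L(\lambda)}(\mu+r\alpha)$ respectively (Lemmas~\ref{The_polynomial_g} and~\ref{The_polynomial_f}). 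Since their ratio is the constant $\m_{L(\lambda)}(\mu)$, an elementary divisibility lemma forces the desired identity. So your closing remark is a bit too pessimistic: Freudenthal \emph{does} yield the clean weights $\level_j(\alpha)$, but only after the polynomial-in-$x$ trick isolates the leading coefficients. What your approach buys is structural transparency: the identity already holds unconditionally on $\Delta_{\mathfrak{p}_j}(\lambda)$ by pure double counting, and the hypothesis $c_j\leqslant a_j$ enters only to guarantee that the kernel $K$ of $\Delta_{\mathfrak{p}_j}(\lambda)\twoheadrightarrow L(\lambda)$ misses all the relevant weight spaces---this is in fact a slightly sharper statement than the paper proves. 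Conversely, the paper's route avoids the category~$\mathcal{O}$ machinery of parabolic Verma modules, stays close to Freudenthal's formula (which is the object the theorem is meant to improve), and uses Proposition~\ref{Main_Result_1} as an essential ingredient rather than a byproduct.
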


\begin{remarksintro} 
One fundamental difference between the formula stated in Theorem \ref{New_Freudenthal's_formula} and the classical formula of Freudenthal resides in the indices of summation, especially those associated to the second sum, ranging over all elements in $\Phi^+_j$ instead of $\Phi^+.$ For example, if  $\mathfrak{g}$ is of classical type, then $|\Phi_1^+| \in O(l),$ while $ |\Phi^+| \in O(l^2).$ Also observe that the computation of $(\mu+r\alpha,\alpha)$ for every $r$ and $\alpha$ is no longer necessary in Theorem \ref{New_Freudenthal's_formula}. Finally, even in the case where $\{1\leqslant j\leqslant l: 0<c_j \leqslant a_j\} =\emptyset,$ there still might exist $r>0$ and $\alpha \in \Phi^+$ such that $\mu+r\alpha \prec \lambda$ and  $\{1\leqslant j\leqslant l: 0<\level_j(\lambda-\mu-r\alpha) \leqslant a_j\} \neq\emptyset.$ Consequently $\m_{L(\lambda)}(\mu+r\alpha)$ could be computed using Theorem \ref{New_Freudenthal's_formula}, hence simplifying the use of Freudenthal's formula,  even though $\mu$ itself did not satisfy the necessary condition.
\end{remarksintro}  

Finally, let $\mathfrak{g}$ be a simple Lie algebra of type $A_l$ over $\mathbb{C},$ and for a non-zero dominant integral weight $  \lambda=\sum_{r=1}^l{a_r\lambda_r},$ define $I_{\lambda} =\{r_1,\ldots,r_{N_{\lambda}}\}$ to be maximal in $\{1,\ldots,l\}$ such that $r_1<\ldots <r_{N_{\lambda}}$ and $\prod_{r\in I_{\lambda}}{a_r}\neq 0.$ The following result consists of a direct application of Theorem \ref{New_Freudenthal's_formula} in unbounded rank.

\begin{propintro}\label{Application_to_type_A}
Let $\mathfrak{g}$ be a simple Lie algebra of type $A_l$ over $\mathbb{C} $ and let $\lambda=\sum_{r=1}^{l}{a_r\lambda_r}\in \Lambda^+$ be a non-zero dominant integral weight. Also let $I_{\lambda}=\{r_1,\ldots,r_{N_{\lambda}}\}$ be as above and consider  $\mu=\lambda-(\alpha_1+\cdots+\alpha_l) \in \Lambda.$ If $N_\lambda=1,$ then $\m_{L(\lambda)}(\mu)=1,$ while if  $N_\lambda\geqslant 2,$ then 
$$
\m_{L(\lambda)}(\mu)= \prod_{i=2}^{N_{\lambda}}{(r_i-r_{i-1}+1)}.
$$

\end{propintro}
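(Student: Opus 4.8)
The strategy is to apply Theorem~\ref{New_Freudenthal's_formula} with $j=r_1$, the smallest index for which $a_{r_1}\neq 0$. Since $\mu=\lambda-(\alpha_1+\cdots+\alpha_l)$, we have $c_r=1$ for all $r$, so in particular $0<c_{r_1}=1\leqslant a_{r_1}$, and the hypothesis of Theorem~\ref{New_Freudenthal's_formula} is met. With $c_j=1$ the outer sum collapses, giving
$$
\m_{L(\lambda)}(\mu)=\sum_{\alpha\in\Phi^+_{r_1}}{\level_{r_1}(\alpha)\,\m_{L(\lambda)}(\mu+\alpha)}.
$$
For type $A_l$, every positive root has the form $\alpha_{[p,q]}=\alpha_p+\alpha_{p+1}+\cdots+\alpha_q$ for $1\leqslant p\leqslant q\leqslant l$, and $\level_{r_1}(\alpha_{[p,q]})\in\{0,1\}$, equal to $1$ precisely when $p\leqslant r_1\leqslant q$. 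Hence the sum runs over those intervals $[p,q]$ with $p\leqslant r_1\leqslant q$, each contributing $\m_{L(\lambda)}(\mu+\alpha_{[p,q]})$ with coefficient $1$. Now $\mu+\alpha_{[p,q]}=\lambda-\sum_{r\notin[p,q]}\alpha_r$, i.e. it is $\lambda$ minus a sum of simple roots supported on $\{1,\ldots,p-1\}\cup\{q+1,\ldots,l\}$ (a set that misses $r_1$).

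The key step is to identify which of these weights $\mu+\alpha_{[p,q]}$ actually occur in $L(\lambda)$ and to show that each occurring one has multiplicity $1$. I expect the cleanest route is: a weight of the form $\lambda-\sum_{r\in S}\alpha_r$ with $S\subseteq\{1,\ldots,l\}$ a set of \emph{consecutive} indices lies in $\Lambda(\lambda)$ and has multiplicity $1$ if and only if $S$ contains no index $r$ with $a_r\neq 0$ that is ``interior'' in an appropriate sense — more precisely, one checks via the description of $\rad(\lambda)$ (Theorem~\ref{A_description_of_N(lambda)}) that subtracting a connected string of simple roots from $\lambda$ stays a weight of multiplicity $1$ exactly when the corresponding string, viewed inside the Dynkin diagram, has at most one ``active'' node at an appropriate boundary; for a string disjoint from $r_1$ this amounts to a condition relating the endpoints of the string to the elements of $I_\lambda$. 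The weight $\mu+\alpha_{[p,q]}$ corresponds to the (possibly disconnected) set $\{1,\ldots,p-1\}\cup\{q+1,\ldots,l\}$, so its multiplicity factors as a product over the two connected pieces, and one shows each piece contributes $1$ iff the piece meets $I_\lambda$ in at most one point adjacent to its far end, and $0$ otherwise.

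Assembling the count: the nonzero terms correspond to pairs $(p,q)$ with $p\leqslant r_1\leqslant q$ such that $\{1,\ldots,p-1\}$ meets $I_\lambda$ in at most its largest admissible element and likewise for $\{q+1,\ldots,l\}$; a short combinatorial bookkeeping over the ``gaps'' $r_{i-1},r_i$ of $I_\lambda$ then shows the number of admissible choices of the left endpoint $p$ contributes one factor and of the right endpoint $q$ another, and that the total is $\prod_{i=2}^{N_\lambda}(r_i-r_{i-1}+1)$ (with the empty product convention giving $1$ when $N_\lambda=1$). The main obstacle is the second paragraph: pinning down exactly when $\mu+\alpha_{[p,q]}$ is a weight of $L(\lambda)$ and verifying its multiplicity is $1$, which requires a careful application of the description of $\rad(\lambda)$ together with an explicit analysis of $\mathfrak{sl}_2$-strings in type $A$; once that dictionary is in place, the final count is routine.
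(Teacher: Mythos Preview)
Your opening move---applying Theorem~\ref{New_Freudenthal's_formula} at $j=r_1$ so that the outer sum collapses---is exactly how the paper begins (after a preliminary $\mathscr{W}$-conjugacy reduction to $r_1=1$). The divergence, and the error, is in the next step.

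You assert that each surviving term $\m_{L(\lambda)}(\mu+\alpha_{[p,q]})$ is either $0$ or $1$, and that the proof then reduces to a count. This is false as soon as $N_\lambda\geqslant 3$. Take $l=5$ and $\lambda=a_1\lambda_1+a_3\lambda_3+a_5\lambda_5$ with all $a_i>0$, so $I_\lambda=\{1,3,5\}$. With $p=1$ and $q=2$ one has $\mu+\alpha_{[1,2]}=\lambda-(\alpha_3+\alpha_4+\alpha_5)$. By Lemma~\ref{Restriction_to_suitable_Levi_subalgebra} this multiplicity equals $\m_{L(\omega)}(\omega-(\beta_1+\beta_2+\beta_3))$ for the $A_3$-Levi on $\{3,4,5\}$, where $\omega=a_3\omega_1+a_5\omega_3$; but that is precisely an instance of the very proposition you are proving, with $N_\omega=2$, and its value is $3$, not $1$. (Your stated criterion would declare it $0$, since the right piece $\{3,4,5\}$ meets $I_\lambda$ in two points.) In general the terms $\m_{L(\lambda)}(\lambda-\sum_{r>q}\alpha_r)$ are again weight multiplicities of the same shape in a smaller Levi and can take any value permitted by the formula.

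This is exactly why the paper argues by induction on $N_\lambda$. After one application of Theorem~\ref{New_Freudenthal's_formula} at $j=1$ (having reduced to $r_1=1$), it groups the terms $q=1,\ldots,r_2-1$ via $\mathscr{W}$-conjugacy, passes to the Levi on $\{r_2,\ldots,l\}$ via Lemma~\ref{Restriction_to_suitable_Levi_subalgebra}, and recognises the remaining sum as another instance of Theorem~\ref{New_Freudenthal's_formula} for that Levi; this yields $\m_{L(\lambda)}(\mu)=r_2\cdot \m_{L(\omega)}(\nu)$ with $N_\omega=N_\lambda-1$, and induction finishes. Your direct-count strategy cannot avoid this recursion: the terms you hoped were $0$/$1$ are themselves the inductive subproblems.
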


\begin{remarkintro}
Observe that the weight $\mu\in \Lambda$ defined in the statement of Proposition \ref{Application_to_type_A} is dominant integral if and only if $a_1a_l\neq 0.$ Also notice that Proposition \ref{Application_to_type_A} consists in a generalization of  \cite[Lemma 8.6]{Seitz}, which simply corresponds to the special situation in which $I_{\lambda}=\{1,l\}.$
\end{remarkintro}

\section{Preliminaries}      

In this section, we recall some elementary properties concerning semisimple Lie algebras and their representations, starting by fixing some notation that will be used for the rest of the paper. Most of the results presented here can be found in \cite{Bourbaki}, \cite{Humphreys1} and \cite{Humphreys2}. Let $\mathfrak{g}$ be a semisimple Lie algebra over $\mathbb{C}$  with Cartan subalgebra $\mathfrak{h}.$ Set $l=\rank \mathfrak{g}$ and fix an ordered base $\Pi=\{\alpha_1,\ldots,\alpha_l\}$ of the corresponding root system $\Phi=\Phi^+\sqcup \Phi^-,$ where $\Phi^+$ and $\Phi^-$ denote the sets of positive and negative roots of $\Phi,$ respectively. To each root $\alpha\in \Phi$ corresponds a $1$-dimensional subspace $\mathfrak{g}_{\alpha}$ of $\mathfrak{g}$ (called a \emph{root space}) defined by  
$$
\mathfrak{g}_{\alpha}=\{x\in \mathfrak{g} : [h,x]=\alpha(h)x \mbox{ for all $h\in \mathfrak{h}$}\}.
$$
It is quite common to consider a basis $\mathscr{B}=\{y_{\alpha},h_r,x_{\alpha}: \alpha \in \Phi^+, 1\leqslant r\leqslant l\}
$ for $\mathfrak{g},$ where  $x_{\alpha}\in \mathfrak{g}_{\alpha},$ $y_{\alpha}\in \mathfrak{g}_{-\alpha}$ are root vectors for $\alpha \in \Phi^+$ and $h_r=[x_{\alpha_r},y_{\alpha_r}]$ for $1\leqslant r\leqslant l.$ Such a basis can be chosen in many ways. For example, a standard Chevalley basis (see \cite[Chapter 4]{Carter}) has integral structure constants and hence is easy to work with. (For our purpose though, it is not necessary to make such a refined choice for a basis of $\mathfrak{g}.$) Fixing an ordering $\leqslant$ on $\Phi^+=\{\gamma_1,\ldots,\gamma_m\}$ (with  $\gamma_r=\alpha_r$ for $1\leqslant r\leqslant l$) yields the existence of  an \emph{ordered} basis  
\begin{equation}
\mathscr{B}=\{y_1,\ldots,y_m,h_1,\ldots,h_r,x_1,\ldots,x_m  \}
\label{A_basis_for_g}
\end{equation}
for $\mathfrak{g},$ where  $x_r\in \mathfrak{g}_{\gamma_r},$ $y_{r}\in \mathfrak{g}_{-\gamma_r}$ are root vectors for $1\leqslant r\leqslant m$ and $h_r=[x_{r},y_{r}]$ for $1\leqslant r\leqslant l.$ Throughout this paper, we  fix an ordered basis as in \eqref{A_basis_for_g} for any semisimple Lie algebra $ \mathfrak{g}.$

\subsection{Integral weights}     

The root system $\Phi$ of $\mathfrak{g}$ spans a $\mathbb{Q}$-form $E_0$ of the dual space $\mathfrak{h}^*$ on which the Killing form $(-,-)$ is non-degenerate, providing $E=E_0\otimes_{\mathbb{Q}} \mathbb{R}$ with a natural structure of euclidean space. The $\mathbb{Z}$-span of $\Phi$ in $E$ is called the \emph{root lattice} of $\Phi,$ and the dual  lattice to $\Phi$ in $E,$ defined by 
$$
\Lambda = \{\lambda \in E :\langle \lambda, \alpha \rangle \in \mathbb{Z} \mbox{ for every } \alpha\in \Pi \},
$$ 
is called the \emph{integral weight lattice} associated to $\Phi.$ (Here we adopt the notation $\langle x,y\rangle = 2(x, y) (y,y)^{-1}$ for $x,y \in E$ with $y\neq 0.$) It is a free abelian group of rank $l$ with basis $\{\lambda_1,\ldots,\lambda_l\},$ where $\lambda_1,\ldots,\lambda_l$ denote the \emph{fundamental weights} corresponding to our choice of base $\Pi,$ that is $\langle \lambda_i,\alpha_j\rangle =\delta_{ij}$ for every $1\leqslant i,j\leqslant l.$  In addition, let 
$$
\Lambda^+=\{\lambda\in \Lambda : \langle \lambda, \alpha_r\rangle \geqslant 0\mbox{ for every }1\leqslant r\leqslant l\}
$$
 be the set of \emph{dominant integral weights} and recall the existence of a partial order $\preccurlyeq$ on $\Lambda$, defined by $\mu\preccurlyeq \lambda$ if and only if $\lambda-\mu \in \Gamma,$ where $\Gamma \subset \Lambda$ is the monoid of $\mathbb{Z}_{\geqslant 0}$-linear combinations of simple roots. (We also write $\mu\prec \lambda$ to indicate that $\mu\preccurlyeq \lambda$ and $\mu\neq \lambda.$) Finally, for $\alpha\in \Phi,$ define the reflection $s_{\alpha}:E \to E$ relative to $\alpha$ by 
$$
s_{\alpha}(\lambda)=\lambda-\langle \lambda,\alpha\rangle\alpha,
$$
this for every $\lambda\in \mathfrak{h}^*,$ and denote by $\mathscr{W}$ the finite group $\langle s_{\alpha_r}:1\leqslant r\leqslant l\rangle,$ called the \emph{Weyl group of} $\Phi.$ We say that $\lambda,\mu\in \mathfrak{h}^*$ are \emph{conjugate under the action of $\mathscr{W}$} (or $\mathscr{W}$\emph{-conjugate}) if there exists $w\in \mathscr{W}$ such that $w \lambda=\mu.$ One easily shows that $\mathscr{W}$ stabilizes $\Lambda$ and it is well-known (see \cite[Section 13.2, Lemma A]{Humphreys1}, for example) that each weight in $\Lambda$ is $\mathscr{W}$-conjugate to a unique dominant integral weight. Also  if $\lambda\in \Lambda^+,$ then $w\lambda \preccurlyeq \lambda$ for every  $w\in \mathscr{W}.$

\subsection{Universal enveloping algebras}     

In this section, we recall some elementary facts on universal enveloping algebras of Lie algebras. Most of the results presented here can be found in \cite{Bourbaki}  or \cite{Humphreys1}.  A \emph{universal enveloping algebra} of an arbitrary Lie algebra  $\mathfrak{L}$ over $\mathbb{C}$ is a pair $(\mathfrak{U}(\mathfrak{L}),\iota),$ where $\mathfrak{U}(\mathfrak{L})$ is an associative algebra with $1$ over $\mathbb{C},$ $\iota : \mathfrak{L} \to \mathfrak{U}(\mathfrak{L})$ is a linear map satisfying 
\begin{equation}
\iota([x,y])=\iota(x)\iota(y)-\iota(y)\iota(x),
\label{Universal_enveloping_algebra_:_iota}
\end{equation}
for $x,y\in \mathfrak{L},$ and such that the following universal property holds: for any associative algebra  $\mathfrak{U}$ with $1$ and any linear map $\eta:\mathfrak{L} \to \mathfrak{U}$ satisfying \eqref{Universal_enveloping_algebra_:_iota}, there exists a unique morphism of algebras $\phi : \mathfrak{U}(\mathfrak{L}) \to \mathfrak{U}$ such that $\phi\circ \iota = \eta.$ The existence and uniqueness (up to isomorphism) of such a pair $(\mathfrak{U}(\mathfrak{L}),\iota)$ are not too difficult to establish (see \cite[Section 17.2]{Humphreys1}, for example) and the well-known \emph{Poincar\'e-Birkhoff-Witt Theorem}  (or \emph{PBW-Theorem}) implies that if $\mathfrak{L}$ is a Lie algebra with corresponding universal enveloping algebra $(\mathfrak{U}(\mathfrak{L}),\iota),$ then $\iota$ is injective. Furthermore, if $\mathfrak{L}$ is identified with its image in $\mathfrak{U}(\mathfrak{L})$ and if $(x_1,x_2,\ldots)$ is an ordered basis for $\mathfrak{L},$ then a basis for $\mathfrak{U}(\mathfrak{L})$ is given by 
$$
\left\{x_1^{t_1} \cdots x_k^{t_k} : k\in \mathbb{Z}_{\geqslant 0},~t_1,\ldots,t_k \in \mathbb{Z}_{\geqslant 0}\right\}.
$$

In the case where $\mathfrak{g}$ is a semisimple Lie algebra with ordered basis as in \eqref{A_basis_for_g}, one deduces that a basis for $\mathfrak{U}(\mathfrak{g})$ consists of elements of the form   
\begin{equation}
y_1^{r_1}\cdots y_m^{r_m} h_1^{s_1} \cdots h_l^{s_l}x_1^{t_1}\cdots x_m^{t_m},
\label{Standard_PBW_ordering}
\end{equation}
where $r_i,s_j,t_i\in \mathbb{Z}_{\geqslant 0}$ for every $1\leqslant i\leqslant m$ and every $1\leqslant j\leqslant l.$ Finally, it turns out that $\mathfrak{U}(\mathfrak{g})$ can be decomposed into a direct sum of subspaces of the form $\mathfrak{U}(\mathfrak{g})_{\gamma},$ where $\gamma \in \mathbb{Z}\Phi$ and $\mathfrak{U}(\mathfrak{g})_{\gamma}$ is spanned by those monomials in  \eqref{Standard_PBW_ordering} for which $\gamma =\sum_{i=1}^m{(t_i-r_i)\gamma_i}.$

\subsection{Representations of $\mathfrak{U}(\mathfrak{g})$}                                                              

In this section, we recall some basic properties of $\mathfrak{U}(\mathfrak{g})$-modules (or equivalently, $\mathfrak{g}$-modules). Unless specified otherwise, the results recorded here can be found in \cite[Section 20]{Humphreys1}. Let  $V$ denote an arbitrary $\mathfrak{U}(\mathfrak{g})$-module and for $\mu \in \mathfrak{h}^*,$ set 
$$
V_\mu=\{v\in V: h v=\mu(h) v \mbox{ for all $h\in \mathfrak{h}$}\}.
$$
An element $\mu \in \mathfrak{h}^*$ with $V_\mu\neq 0$ is called a \emph{weight of} $V$ and $V_\mu$ is said to be its corresponding \emph{weight space}. The dimension of $V_\mu$ (possibly infinite) is called the \emph{multiplicity of $\mu$ in} $V$ and is denoted by $\m_V(\mu).$ It behaves well with respect to short exact sequences, in the following sense: if $0\to V_1 \to V_2 \to V_3\to 0$ is a short exact sequence of $\mathfrak{U}(\mathfrak{g})$-modules and $\mu \in \mathfrak{h}^*,$ then 
\begin{equation}
\m_{V_2}(\mu)=\m_{V_1}(\mu)+\m_{V_3}(\mu).
\label{ses}
\end{equation}

Also write $\Lambda(V)$ to denote the set of weights of $V$ and as in the integral case, define a partial order on the latter by saying that $\mu\in \Lambda(V)$ is \emph{under} $\lambda \in \Lambda(V)$ (written $\mu \preccurlyeq \lambda$) if and only if $\lambda-\mu\in \Gamma,$ where $\Gamma$ denotes the monoid of $\mathbb{Z}_{\geqslant 0}$-linear combinations of simple roots. In addition, we write $\mu\prec \lambda$ to indicate that $\mu$ is \emph{strictly}  under $\lambda,$ i.e. $\mu$ is under $\lambda$ and $\mu\neq \lambda.$  
\vspace{5mm}

A $\mathfrak{U}(\mathfrak{g})$-module $V$ is said to be a \emph{weight module} if it is $\mathfrak{h}$-semisimple, that is, if it can be decomposed into a direct sum of its weight spaces. If $\dim V < \infty,$ then $V$ is always a weight module, while if on the other hand $V$ is infinite-dimensional, then the sum of its weight spaces might be a proper submodule. Nevertheless, two weight spaces corresponding to different weights always intersect trivially, from which one easily deduces that if $U,$ $W$ are two submodules of a weight module $V$ and $\mu\in \Lambda(V),$ then 
\begin{equation}
(U+W)_{\mu} = U_{\mu}+W_{\mu}.
\label{(U+W)_mu=?}
\end{equation}

A non-zero vector $v^+\in V$ is called a \emph{maximal vector} of weight $\lambda\in \mathfrak{h}^*$ if $v^+\in V_{\lambda}$ and $x_r v^+=0$ for every $1\leqslant r\leqslant m.$ Also, we say that $V$ is a \emph{highest weight module} of weight $\lambda$ if there exists a maximal vector $v^+\in V_{\lambda}$ such that $\mathfrak{U}(\mathfrak{g}) v^+ =V.$ Write $\mathfrak{n} = \bigoplus_{r=1}^m \langle x_r\rangle_{\C}$ and $\mathfrak{n}^- = \bigoplus_{r=1}^m \langle y_r\rangle_{\C}.$  Since $\mathfrak{U}(\mathfrak{g})=\mathfrak{U}(\mathfrak{n^{-}}) \mathfrak{U}(\mathfrak{h}) \mathfrak{U}(\mathfrak{n}),$  the module $V$ is generated by $v^+$ as a $ \mathfrak{U}(\mathfrak{n^-}) $-module, so that
\begin{equation}
V_{\mu}=\left\langle y_1^{r_1}\cdots y_m^{r_m}v^+ : r_1,\ldots,r_m\in \mathbb{Z}_{\geqslant 0},~\sum_{i=1}^m{r_i \gamma_i}=\lambda-\mu \right\rangle_\mathbb{C}
\label{weight spaces in highest weight modules}
\end{equation}
for any $\mu \in \mathfrak{h}^*.$ Finally, the natural action of the Weyl group $\mathscr{W}$ on $\mathfrak{h}^*$ induces an action on $\Lambda(V).$ As in the integral case, we say that $\lambda,\mu\in \Lambda(V)$ are \emph{conjugate under the action of $\mathscr{W}$} (or $\mathscr{W}$\emph{-conjugate}) if there exists $w\in \mathscr{W}$ such that $w \lambda=\mu.$

\subsection{Verma modules and the BGG category $\mathcal{O}$}     
\label{Subsection_on_Verma_modules}                                 

In the remainder of this paper, we shall be particularly interested in finitely generated, $\mathfrak{h}$-semisimple $\mathfrak{U}(\mathfrak{g})$-modules $V$ such that for every $v\in V,$ the subspace $\mathfrak{U}(\mathfrak{n})v$ of $V$ is finite-dimensional. (The latter condition is called \emph{local $\mathfrak{n}$-finiteness}.) Such modules form the objects of a subcategory $\mathcal{O}$ of the category of (left) $\mathfrak{U}(\mathfrak{g})$-modules,  called the \emph{BGG category}. The latter is closed under submodules, quotients and finite direct sums and it turns out that every irreducible module in $\mathcal{O}$ can be obtained as the quotient of a certain highest weight module, called a \emph{Verma module}. All results presented here can be found in \cite[Chapter 1]{Humphreys2}, for example.

\begin{defin}[Verma module]
Set $\mathfrak{b}=\mathfrak{n} \oplus \mathfrak{h}$ and for $\lambda\in \mathfrak{h}^*,$ let $\mathbb{C}_{\lambda}$ denote the $\mathfrak{U}(\mathfrak{b})$-module defined by $\mathfrak{n}\xi=0$ and $h \xi=\lambda(h) \xi$ for all  $h\in \mathfrak{h}$ and $\xi\in \mathbb{C}.$ The \emph{Verma module  of weight $\lambda$} is the $\mathfrak{U}(\mathfrak{g})$-module $\Delta(\lambda)$ obtained by inducing $\C_\lambda$ from $\mathfrak{b}$ to $\mathfrak{g},$ that is, 
$$
\Delta(\lambda)=\mathfrak{U}(\mathfrak{g})\otimes_{\mathfrak{U}(\mathfrak{b})} \mathbb{C}_{\lambda}.
$$ 
\end{defin}

The module $\Delta(\lambda)$ also admits a description by generators and relations, from which one deduces that $\Delta(\lambda)$ plays the role of  universal highest weight module of weight $\lambda$ in the category $\mathcal{O}$ (see \cite[Section 1.3]{Humphreys2}). Therefore by \eqref{weight spaces in highest weight modules}, one gets that the weight space $\Delta(\lambda)_\mu$ corresponding to $\mu\in \mathfrak{h}^*$ is spanned by the set
\begin{equation}
\mathscr{B}(\lambda)_{\mu}=\left\{y_1^{r_1} \cdots y_m^{r_m}v^{\lambda} : r_1,\ldots,r_m \in \mathbb{Z}_{\geqslant 0},~\sum_{i=1}^{m}{r_i\gamma_i}=\lambda-\mu\right\},
\label{B(lambda)_mu}
\end{equation}
where $v^{\lambda}$ denotes a maximal vector of weight $\lambda$ in $\Delta(\lambda).$ The cardinality of the set \eqref{B(lambda)_mu} equals $P(\lambda-\mu),$ where $P:\mathfrak{h}^* \to \mathbb{Z}_{\geqslant 0}$ corresponds to the \emph{Kostant function}, whose value at $\alpha\in \mathfrak{h}^*$ is defined to be the number of distinct sets of non-negative integers $c_1,\ldots,c_m$ for which $\alpha=\sum_{r=1}^m {c_{r}\gamma_r}.$  The following result consists in a description of a basis for $\Delta(\lambda)_\mu,$ thus leading to the knowledge of the multiplicity of $\mu$ in $\Delta(\lambda).$ Its proof immediately follows from the fact that $\Delta(\lambda) \cong \mathfrak{U}(\mathfrak{n^-})$ as $\mathfrak{U}(\mathfrak{n^-})$-modules (see \cite[Section 20.3]{Humphreys1}, for example).

\begin{lem}\label{A_basis_for_M(lambda)_mu}
Let $\lambda,\mu \in \mathfrak{h}^*$  and consider the Verma module $\Delta(\lambda)$ of weight $\lambda.$ Then the set \eqref{B(lambda)_mu} forms a basis for $\Delta(\lambda)_\mu.$ In particular  $\dim \Delta(\lambda)_{\mu} = P(\lambda-\mu).$
\end{lem}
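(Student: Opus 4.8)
The statement to prove is Lemma~\ref{A_basis_for_M(lambda)_mu}: that the set $\mathscr{B}(\lambda)_\mu$ from \eqref{B(lambda)_mu} forms a basis for $\Delta(\lambda)_\mu$, so that $\dim \Delta(\lambda)_\mu = P(\lambda-\mu)$.

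The plan is to exploit the identification $\Delta(\lambda) \cong \mathfrak{U}(\mathfrak{n}^-)$ as $\mathfrak{U}(\mathfrak{n}^-)$-modules, which follows from the PBW decomposition $\mathfrak{U}(\mathfrak{g}) = \mathfrak{U}(\mathfrak{n}^-)\mathfrak{U}(\mathfrak{h})\mathfrak{U}(\mathfrak{n})$ applied to the induced module $\Delta(\lambda) = \mathfrak{U}(\mathfrak{g})\otimes_{\mathfrak{U}(\mathfrak{b})}\mathbb{C}_\lambda$: since $\mathfrak{U}(\mathfrak{h})$ and $\mathfrak{U}(\mathfrak{n})$ act on $\mathbb{C}_\lambda$ by scalars (and by zero on $\mathfrak{n}$), the map $u \mapsto u \otimes \xi$ (for a fixed spanning vector $\xi$ of $\mathbb{C}_\lambda$) gives a $\mathfrak{U}(\mathfrak{n}^-)$-module isomorphism $\mathfrak{U}(\mathfrak{n}^-) \xrightarrow{\sim} \Delta(\lambda)$, with $1 \mapsto v^\lambda := 1\otimes\xi$. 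Under this isomorphism, the monomials $y_1^{r_1}\cdots y_m^{r_m}$ with $r_i \in \mathbb{Z}_{\geqslant 0}$ form a PBW basis of $\mathfrak{U}(\mathfrak{n}^-)$, and hence the vectors $y_1^{r_1}\cdots y_m^{r_m} v^\lambda$ (over all tuples $(r_1,\ldots,r_m)$) form a basis of $\Delta(\lambda)$.

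Next I would compute the weight of such a basis vector. Since $y_i \in \mathfrak{g}_{-\gamma_i}$, an induction on $\sum_i r_i$ (using $h\cdot(y v) = y\cdot(h v) - \gamma(h)(y v)$ for $y \in \mathfrak{g}_{-\gamma}$, which is just the defining commutation relation $[h,y] = -\gamma(h)y$) shows that $y_1^{r_1}\cdots y_m^{r_m}v^\lambda$ lies in the weight space $\Delta(\lambda)_{\mu}$ with $\mu = \lambda - \sum_{i=1}^m r_i\gamma_i$. Therefore the full PBW basis of $\Delta(\lambda)$ partitions, according to weight, into the subsets $\mathscr{B}(\lambda)_\mu$; since a subset of a basis consisting of all basis elements lying in a given direct summand is a basis of that summand, $\mathscr{B}(\lambda)_\mu$ is a basis of $\Delta(\lambda)_\mu$. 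Finally, the cardinality count: a basis vector lies in $\mathscr{B}(\lambda)_\mu$ precisely when $(r_1,\ldots,r_m)$ satisfies $\sum_i r_i\gamma_i = \lambda-\mu$, and the number of such tuples is by definition $P(\lambda-\mu)$, which is finite (and equals $0$ unless $\mu \preccurlyeq \lambda$).

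There is no real obstacle here; the only points requiring a modicum of care are (i) verifying that the spanning set \eqref{B(lambda)_mu} is actually linearly independent — which is exactly where the PBW theorem (already invoked in the excerpt) does the work, since without it one only gets a spanning set from \eqref{weight spaces in highest weight modules} — and (ii) making sure the weight-space decomposition of $\Delta(\lambda)$ is honest, i.e. that $\Delta(\lambda) = \bigoplus_\mu \Delta(\lambda)_\mu$, so that "select the basis vectors in one summand" is legitimate; this again follows from the PBW basis being weight-homogeneous. Since the excerpt explicitly says the proof follows from $\Delta(\lambda)\cong\mathfrak{U}(\mathfrak{n}^-)$ as $\mathfrak{U}(\mathfrak{n}^-)$-modules, I would keep the write-up to essentially the two short paragraphs above.
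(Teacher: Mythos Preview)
Your proposal is correct and follows exactly the approach indicated in the paper: the paper's ``proof'' consists solely of the remark that the result follows immediately from the isomorphism $\Delta(\lambda)\cong \mathfrak{U}(\mathfrak{n}^-)$ of $\mathfrak{U}(\mathfrak{n}^-)$-modules (with a reference to Humphreys), and your write-up is precisely the natural unpacking of that sentence via the PBW basis and its weight-homogeneity.
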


\begin{remark}\label{Remark_on_weight_spaces_in_Verma_modules}
Let $\lambda, \delta \in \mathfrak{h}^*$ and fix two maximal vectors  $v^{\lambda},v^{\delta}$ in $\Delta(\lambda)_{\lambda},$ $\Delta(\delta)_{\delta},$ respectively. Also let $\gamma \in \Gamma$ and set $\mu=\lambda-\gamma,$ $\nu=\delta-\gamma.$ By Lemma \ref{A_basis_for_M(lambda)_mu}, we have $\m_{\Delta(\lambda)}(\mu)=\m_{\Delta(\delta)}(\nu)$ and the sets $\mathscr{B}(\lambda)_{\mu},$ $\mathscr{B}(\delta)_{\nu}$ as in \eqref{B(lambda)_mu} form ordered bases of $\Delta(\lambda)_{\mu},$ $\Delta(\delta)_{\nu},$ respectively. Furthermore, for any $y\in \mathfrak{U}(\mathfrak{n}^-),$ the coefficients of $yv^{\lambda}$ with respect to $\mathscr{B}(\lambda )_{\mu}$  and the coefficients of $yv^{\delta}$ with respect to $\mathscr{B}(\delta )_{\nu}$   are identical, since obtained by successively applying standard commutation formulas in $\mathfrak{U}(\mathfrak{g}).$
\end{remark}

It turns out that $\Delta(\lambda)$ contains a unique maximal submodule $\rad(\lambda)$ and throughout this paper, we write $L(\lambda)=\Delta(\lambda)/\rad(\lambda)$ for the corresponding irreducible quotient. Unfortunately, there is no analogue of Lemma \ref{A_basis_for_M(lambda)_mu} for weight spaces in $L(\lambda)$ for an arbitrarily given $\lambda\in \mathfrak{h}^*.$ Nevertheless, applying \eqref{ses} to the short exact sequence $$0 \to \rad(\lambda)\to \Delta(\lambda)\to L(\lambda)\to 0$$ and using Lemma \ref{A_basis_for_M(lambda)_mu} , one easily sees that knowing the multiplicity of $\mu\in \mathfrak{h}^*$ in $\rad(\lambda)$ leads to the knowledge of $\m_{L(\lambda)}(\mu)$ as well.  Now for $\lambda\in \mathfrak{h}^*$ arbitrary, no simple description of $\rad(\lambda)$ is known, while in the case where $\lambda$ is dominant integral, then the following result gives a better understanding of the structure of $\rad(\lambda).$

\begin{thm}\label{A_description_of_N(lambda)}
Let $\lambda =\sum_{r=1}^l{a_r\lambda_r}\in \Lambda^+$ be a dominant integral weight and fix a maximal vector $v^{\lambda}$ of weight $\lambda$ in $\Delta(\lambda).$  Then the following assertions hold.
\begin{enumerate}
\item \label{description_of_N(lambda):part1}For every $1\leqslant r\leqslant l,$ the element $y_{r}^{a_r+1}v^\lambda$ is a maximal vector of weight $\lambda-(a_r+1)\alpha_r$ in $\Delta(\lambda).$
\item \label{description_of_N(lambda):part2}For every $1\leqslant r\leqslant l,$ the $\mathfrak{U}(\mathfrak{g})$-module generated by $y_r^{a_r+1}v^\lambda$ is isomorphic to $\Delta(\lambda-(a_r+1)\alpha_r).$
\item \label{description_of_N(lambda):part3}The unique maximal submodule $\rad(\lambda)$ of $\Delta(\lambda)$ is given by $$
\rad(\lambda)=\sum_{r=1}^{l}{\mathfrak{U}(\mathfrak{g})y_r^{a_r+1}v^\lambda}.
$$
\end{enumerate}
\end{thm}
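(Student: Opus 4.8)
The plan is to prove the three parts in order, writing $v^{\lambda}$ for the fixed maximal vector, $w_r=y_r^{a_r+1}v^{\lambda}$ and $\mu_r=\lambda-(a_r+1)\alpha_r$ throughout. For part \eqref{description_of_N(lambda):part1}, the weight of $w_r$ is immediate since $y_r\in\mathfrak{g}_{-\alpha_r}$, so only maximality is at issue, and it suffices to check $x_{\alpha_s}w_r=0$ for the simple positive root vectors $1\leqslant s\leqslant l$ (an easy induction on the height of a root then upgrades this to all of $\Phi^{+}$). For $s\neq r$ one has $\alpha_s-\alpha_r\notin\Phi\cup\{0\}$, so $[x_{\alpha_s},y_r]\in\mathfrak{g}_{\alpha_s-\alpha_r}=0$ and hence $x_{\alpha_s}w_r=y_r^{a_r+1}x_{\alpha_s}v^{\lambda}=0$; for $s=r$ one works inside $\mathfrak{s}_r=\langle x_r,h_r,y_r\rangle\cong\mathfrak{sl}_2(\C)$, where $x_rv^{\lambda}=0$ and $h_rv^{\lambda}=a_rv^{\lambda}$, and the standard $\mathfrak{sl}_2$-relation gives $x_ry_r^{a_r+1}v^{\lambda}=(a_r+1)\bigl(a_r-(a_r+1)+1\bigr)y_r^{a_r}v^{\lambda}=0$.

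For part \eqref{description_of_N(lambda):part2}, part \eqref{description_of_N(lambda):part1} shows that $M_r:=\mathfrak{U}(\mathfrak{g})w_r$ is a highest weight module of weight $\mu_r$, so the universal property of the Verma module yields a surjection $\varphi\colon\Delta(\mu_r)\twoheadrightarrow M_r$ carrying a maximal vector of $\Delta(\mu_r)$ to $w_r$. To see that $\varphi$ is injective I would identify both $\Delta(\lambda)$ and $\Delta(\mu_r)$ with $\mathfrak{U}(\mathfrak{n}^{-})$ as $\mathfrak{U}(\mathfrak{n}^{-})$-modules (sending $u$ to $uv^{\lambda}$, resp.\ to $uv^{\mu_r}$); under these identifications $\varphi$ becomes right multiplication by the nonzero element $y_r^{a_r+1}$ on $\mathfrak{U}(\mathfrak{n}^{-})$, which is injective because $\mathfrak{U}(\mathfrak{n}^{-})$ has no zero divisors (its associated graded ring is the polynomial ring $S(\mathfrak{n}^{-})$). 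Since the image of $\varphi$ is $M_r$, this gives $\Delta(\mu_r)\cong M_r$. (Alternatively one may simply quote the standard fact that every nonzero homomorphism between Verma modules is injective.)

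For part \eqref{description_of_N(lambda):part3}, put $N=\sum_{r=1}^{l}M_r$. One inclusion is easy: each $M_r$ is a highest weight module of weight $\mu_r\prec\lambda$, so all its weights are $\preccurlyeq\mu_r$, whence $(M_r)_{\lambda}=0$ and, by \eqref{(U+W)_mu=?}, $N_{\lambda}=0$; thus $v^{\lambda}\notin N$, so $N$ is a proper submodule and therefore $N\subseteq\rad(\lambda)$, the latter being the unique maximal submodule. For the reverse inclusion I would set $V=\Delta(\lambda)/N$ and let $\bar v$ be the image of $v^{\lambda}$: then $V$ is a highest weight module of weight $\lambda$ with $y_r^{a_r+1}\bar v=0$ for every $r$, and the whole point is to show $V$ is \emph{finite-dimensional}. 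Granting that, $V$ is completely reducible; but a completely reducible highest weight module is irreducible, since its $\lambda$-weight space is one-dimensional and so only one irreducible summand can contain $\bar v$ (which generates $V$). Hence $N$ is a maximal submodule, and combined with $N\subseteq\rad(\lambda)$ this forces $N=\rad(\lambda)$.

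The main obstacle is exactly the finite-dimensionality of $V=\Delta(\lambda)/N$, which is the existence half of the theorem of the highest weight. The standard argument I would give: since $x_r\bar v=0$, $h_r\bar v=a_r\bar v$ and $y_r^{a_r+1}\bar v=0$, the vector $\bar v$ generates a finite-dimensional $\mathfrak{s}_r$-submodule of $V$; one then checks that $\{v\in V:\dim\mathfrak{U}(\mathfrak{s}_r)v<\infty\}$ is a $\mathfrak{U}(\mathfrak{g})$-submodule (using that $\mathfrak{g}$ is a finite-dimensional $\mathfrak{s}_r$-module under the adjoint action), which, since it contains the generator $\bar v$, must be all of $V$; consequently $x_r$ and $y_r$ act locally nilpotently on $V$. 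The operators $\tau_r=\exp(x_r)\exp(-y_r)\exp(x_r)$ are then well defined on $V$ and permute its weight spaces according to the simple reflections, so $\Lambda(V)$ is $\mathscr{W}$-stable; as it is also contained in $\{\mu:\mu\preccurlyeq\lambda\}$ and every $\mathscr{W}$-orbit in it meets the finite set of dominant integral weights $\preccurlyeq\lambda$, the set $\Lambda(V)$ is finite, while $\dim V_{\mu}\leqslant\dim\Delta(\lambda)_{\mu}=P(\lambda-\mu)<\infty$ by Lemma \ref{A_basis_for_M(lambda)_mu}; hence $\dim V<\infty$. In the write-up one could, of course, instead invoke this finite-dimensionality directly from \cite[Section 21]{Humphreys1} or \cite[Section 2.6]{Humphreys2}.
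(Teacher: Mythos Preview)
Your proposal is correct and follows the standard route; the paper itself does not give an independent argument but simply refers to \cite[Proposition 1.4]{Humphreys2} for parts \eqref{description_of_N(lambda):part1}--\eqref{description_of_N(lambda):part2} and to \cite[Theorem 2.6]{Humphreys2} for part \eqref{description_of_N(lambda):part3}, which is exactly the argument you have written out in detail (the $\mathfrak{sl}_2$-commutation for maximality, the no-zero-divisors property of $\mathfrak{U}(\mathfrak{n}^-)$ for injectivity, and the local-$\mathfrak{n}$-nilpotence / $\mathscr{W}$-stability proof of finite-dimensionality of $\Delta(\lambda)/N$). There is nothing to correct.
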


\begin{proof}
The proof of Parts \ref{description_of_N(lambda):part1} and \ref{description_of_N(lambda):part2} essentially depends on some standard commutation formulas in $\mathfrak{U}(\mathfrak{g})$ (see \cite[Proposition 1.4]{Humphreys2}, for example). Also, we refer the reader to \cite[Theorem 2.6]{Humphreys2} for a proof of Part \ref{description_of_N(lambda):part3}.
\end{proof}

\section{Proof of Proposition \ref{Main_Result_1}}     
\label{Proof_of_the_first_result}                      

For a non-empty subset $J$  of $\{1,\ldots,l\},$ set $\mathfrak{h}_{J}= \langle h_{ j} : j \in J\rangle$ as well as  $\mathfrak{g}_{J}=\langle \mathfrak{g}_{\pm \alpha_j} : j \in J\rangle.$ Clearly the Levi subalgebra $\mathfrak{g}_{J}$ of $\mathfrak{g}$ is a semisimple Lie algebra over $\mathbb{C},$ having Cartan subalgebra $\mathfrak{h}_{J}$ and root system $\Phi_{J}=\Phi \cap \mathbb{Z}\{\alpha_j\}_{j\in {J}}.$ We start by stating the following result, whose proof can be found in \cite[Lemma 2.2.8]{BGT}, for example.

\begin{lem}\label{Restriction_to_suitable_Levi_subalgebra}
Let $\lambda\in \Lambda^+$ be a dominant integral weight and let $\mu\in \Lambda$ be such that $\mu\prec \lambda.$ Also let $J\subseteq \{1,\ldots,l\}$ be such that $\mu=\lambda-\sum_{j\in J}{c_j \alpha_j} $ for some subset $\{c_j\}_{j\in J}$ of $\Z_{\geqslant 0}.$ Then 
$$
\m_{L(\lambda)}(\mu)=\m_{L\left(\lambda|_{\mathfrak{h}_{J}}\right)}\left(\mu|_{\mathfrak{h}_{J}}\right).
$$ 
\end{lem}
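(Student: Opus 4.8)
The plan is to restrict $L(\lambda)$ to the Levi subalgebra $\mathfrak{g}_J$ and to analyse the $\mathfrak{g}_J$-submodule generated by a maximal vector. Fix a maximal vector $v^+\in L(\lambda)_{\lambda}$ and set $M=\mathfrak{U}(\mathfrak{g}_J)v^+$, a $\mathfrak{g}_J$-submodule of the restriction $L(\lambda)|_{\mathfrak{g}_J}$. Since $v^+$ is annihilated by every $x_r$, it is in particular annihilated by the positive nilradical $\mathfrak{n}_J=\langle \mathfrak{g}_{\alpha_j}:j\in J\rangle\subseteq\mathfrak{n}$ of $\mathfrak{g}_J$, and it is an $\mathfrak{h}_J$-weight vector of weight $\lambda|_{\mathfrak{h}_J}$ (which is $\mathfrak{g}_J$-dominant integral because $\langle\lambda,\alpha_j\rangle\geqslant 0$ for all $j\in J$). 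Hence $M$ is a highest weight $\mathfrak{g}_J$-module of highest weight $\lambda|_{\mathfrak{h}_J}$. I would then establish three facts: (i) $L(\lambda)_\mu=M_\mu$; (ii) $M\cong L(\lambda|_{\mathfrak{h}_J})$ as $\mathfrak{g}_J$-modules; and (iii) on $M$ the $\mathfrak{h}$-weight space at $\mu$ coincides with the $\mathfrak{h}_J$-weight space at $\mu|_{\mathfrak{h}_J}$. Chaining these gives the claimed equality of multiplicities.

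For (i), I would use \eqref{weight spaces in highest weight modules}: the space $L(\lambda)_\mu$ is spanned by the images of the monomials $y_1^{r_1}\cdots y_m^{r_m}v^+$ with $\sum_i r_i\gamma_i=\lambda-\mu$. By hypothesis $\lambda-\mu=\sum_{j\in J}c_j\alpha_j$ has vanishing coefficient on each $\alpha_k$ with $k\notin J$; since every $\gamma_i\in\Phi^+$ is a non-negative combination of simple roots and all exponents $r_i$ are non-negative, any $\gamma_i$ occurring with $r_i>0$ must itself have vanishing coefficient on all such $\alpha_k$, i.e. $\gamma_i\in\Phi_J^+=\Phi_J\cap\Phi^+$. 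Thus every spanning monomial lies in $\mathfrak{U}(\mathfrak{n}_J^-)\subseteq\mathfrak{U}(\mathfrak{g}_J)$, whence $L(\lambda)_\mu\subseteq M$; the reverse inclusion $M_\mu\subseteq L(\lambda)_\mu$ is trivial, so $L(\lambda)_\mu=M_\mu$.

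The step I expect to be the main obstacle is (ii), the irreducibility of $M$, since a priori a highest weight module need not be irreducible. Here I would invoke Weyl's complete reducibility theorem: as $L(\lambda)$ is finite-dimensional, its restriction to the semisimple algebra $\mathfrak{g}_J$ decomposes as $L(\lambda)|_{\mathfrak{g}_J}=\bigoplus_i N_i$ with each $N_i$ an irreducible $\mathfrak{g}_J$-module. Writing $v^+=\sum_i v_i$ accordingly, each nonzero $v_i$ is again annihilated by $\mathfrak{n}_J$ (the projections onto the $N_i$ are $\mathfrak{g}_J$-homomorphisms) and is an $\mathfrak{h}_J$-weight vector of weight $\lambda|_{\mathfrak{h}_J}$; since the space of $\mathfrak{n}_J$-invariants in a finite-dimensional irreducible $\mathfrak{g}_J$-module is one-dimensional and spanned by its highest weight vector, every such $N_i$ has highest weight $\lambda|_{\mathfrak{h}_J}$, i.e. $N_i\cong L(\lambda|_{\mathfrak{h}_J})$. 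The projection of $M$ onto any component $N_{i_0}$ with $v_{i_0}\neq 0$ then restricts to an isomorphism $M\xrightarrow{\sim} N_{i_0}$, so $M\cong L(\lambda|_{\mathfrak{h}_J})$ is irreducible.

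Finally, for (iii), every $\mathfrak{h}$-weight of $M$ has the form $\lambda-\sum_{j\in J}d_j\alpha_j$, and the restriction map to $\mathfrak{h}_J^*$ is injective on such weights because the $\alpha_j|_{\mathfrak{h}_J}$ $(j\in J)$ are the simple roots of $\Phi_J$ and hence linearly independent. Consequently a vector of $M$ has pure $\mathfrak{h}_J$-weight $\mu|_{\mathfrak{h}_J}$ if and only if it has pure $\mathfrak{h}$-weight $\mu$, so $M_\mu$ equals the $\mathfrak{h}_J$-weight space of $M$ at $\mu|_{\mathfrak{h}_J}$. Combining with (i) and (ii) yields $\m_{L(\lambda)}(\mu)=\dim L(\lambda)_\mu=\dim M_\mu=\m_{L(\lambda|_{\mathfrak{h}_J})}(\mu|_{\mathfrak{h}_J})$, as desired.
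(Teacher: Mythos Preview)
The paper does not supply its own proof of this lemma; it simply refers the reader to \cite[Lemma 2.2.8]{BGT}. Your argument is correct and is the standard one.

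The only step that merits one more line is the injectivity of the projection $M\to N_{i_0}$ in (ii). Surjectivity is clear since $v_{i_0}$ generates $N_{i_0}$; for injectivity, note that each $N_i$ with $v_i\neq 0$ is isomorphic to $L(\lambda|_{\mathfrak{h}_J})$ via an isomorphism carrying $v_{i_0}$ to a scalar multiple of $v_i$, so any $u\in\mathfrak{U}(\mathfrak{g}_J)$ with $u\cdot v_{i_0}=0$ annihilates every $v_i$ and hence $v^+$. Equivalently, you can bypass the ambient decomposition altogether: $M$ is finite-dimensional and therefore semisimple as a $\mathfrak{g}_J$-module, and by your step (iii) its $\mathfrak{h}_J$-weight space at $\lambda|_{\mathfrak{h}_J}$ coincides with $M\cap L(\lambda)_\lambda=\mathbb{C}v^+$, which is one-dimensional; a semisimple module generated by a one-dimensional highest weight line cannot split nontrivially, so $M$ is irreducible.
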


\begin{remark}\label{A_first_simplification_of_J_in_Prop_1}
Let $\lambda=\sum_{r=1}^l{a_r \lambda_r}\in \Lambda^+$ and let $\mu\in \Lambda$ be such that  $ \mu = \lambda - \sum_{r=1}^l{ c_r \alpha_r}\in \Lambda$ for some $c_1,\ldots,c_l\in \mathbb{Z}_{\geqslant 0}.$ Also let $J\subseteq  \{1,\ldots,l\}$ be minimal such that $\mu=\lambda-\sum_{j\in J}{c_j\alpha_j}.$ An application of Lemma \ref{Restriction_to_suitable_Levi_subalgebra} then shows that $\m_{L(\lambda)}(\mu)$ is independent of the value of each $a_k$ such that $k\notin J.$ In particular  
$
\m_{L(\lambda)}(\mu)=\m_{L (\lambda')}(\mu'),
$ 
where $\lambda'=\lambda-\sum_{k\notin J}{a_k\lambda_k}$ and $\mu'=\lambda'-(\lambda-\mu).$ Consequently one can assume $c_j\neq 0$ for every $j\in J$ in Proposition \ref{Main_Result_1} and focus on the situation where $\mathfrak{g}$ is simple.
\end{remark}

In order to give a proof of Proposition \ref{Main_Result_1} in the general case, we proceed by induction on the cardinality of the subset $J$ introduced  in the statement of the latter. The difficulty mainly resides in dealing with the base case of the induction, that is, with the situation where  $J $ is a singleton. For $\lambda =\sum_{r=1}^l{a_r\lambda_r}\in \Lambda^+$ a dominant integral weight, $1\leqslant j\leqslant l$ and $x\in \mathbb{Z},$ define 
$$
\lambda_{j,x}=\lambda+(x-a_j)\lambda_j.$$
Clearly $\lambda_{j,x}$ is simply obtained from $\lambda$ by replacing $a_j$ with $x$ and hence remains dominant integral if and only if $x\in \mathbb{Z}_{\geqslant 0}.$ In addition, for $\mu\in \Lambda$ such that $\mu\preccurlyeq \lambda,$ also  define
$$
\mu^{\lambda_{j,x}}=\lambda_{j,x} - (\lambda-\mu).
$$
Obviously $\lambda^{\lambda_{j,x}}=\lambda_{j,x}$ and if $\mu=\lambda-\sum_{r=1}^l{c_r\alpha_r},$ then $\mu^{\lambda_{j,x}} = \lambda_{j,x}-\sum_{r=1}^l{c_r\alpha_r}.$ In other words, $\mu^{\lambda_{j,x}}$ is the unique integral weight for which $\lambda-\mu =\lambda_{j,x}-\mu^{\lambda_{j,x}}.$  Finally,  define the \emph{$j$-level} of $\alpha=\sum_{r=1}^l{d_r\alpha_r}\in \Gamma$ to be  $\level_j(\alpha)=d_j.$ The following elementary result provides the reader with a way of becoming familiar with the recently introduced notation. It shall be used in Section \ref{Section:New_Freudenthal's_formula}.

\begin{lem}\label{Another_expression_for_(lambda_j,x,alpha)_for_every_alpha_in_Gamma}
Let $\lambda\in \Lambda^+$ be a dominant integral weight and let $\alpha\in \Gamma.$ Then for every $1\leqslant j\leqslant l$ and every $x\in \mathbb{Z},$ we have 
$$
(\lambda_{j,x},\alpha)=(\lambda_{j,0},\alpha)+x\level_j(\alpha)(\lambda_j,\alpha_j).
$$
\end{lem}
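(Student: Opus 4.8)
The plan is to unwind both sides of the claimed identity directly from the definitions, using the linearity of the inner product $(-,-)$ and the bilinear expansion of $\lambda_{j,x}$ in terms of the fundamental weights.

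First I would write $\lambda_{j,x}=\lambda_{j,0}+x\lambda_j$, which is immediate from the definition $\lambda_{j,x}=\lambda+(x-a_j)\lambda_j$ together with $\lambda_{j,0}=\lambda-a_j\lambda_j$. Indeed, $\lambda_{j,0}+x\lambda_j=\lambda-a_j\lambda_j+x\lambda_j=\lambda+(x-a_j)\lambda_j=\lambda_{j,x}$. Then by bilinearity of $(-,-)$,
$$
(\lambda_{j,x},\alpha)=(\lambda_{j,0},\alpha)+x(\lambda_j,\alpha).
$$
So the entire content of the lemma reduces to the claim that $(\lambda_j,\alpha)=\level_j(\alpha)(\lambda_j,\alpha_j)$ for every $\alpha\in\Gamma$.

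To establish this last claim I would use the fact that $\lambda_j$ is orthogonal to $\alpha_r$ for all $r\neq j$. This follows from $\langle\lambda_j,\alpha_r\rangle=\delta_{jr}$ (the defining property of the fundamental weights): since $\langle x,y\rangle=2(x,y)(y,y)^{-1}$, we get $(\lambda_j,\alpha_r)=\tfrac12\langle\lambda_j,\alpha_r\rangle(\alpha_r,\alpha_r)=0$ whenever $r\neq j$. Now writing $\alpha=\sum_{r=1}^l d_r\alpha_r$ with $d_j=\level_j(\alpha)$, bilinearity gives
$$
(\lambda_j,\alpha)=\sum_{r=1}^l d_r(\lambda_j,\alpha_r)=d_j(\lambda_j,\alpha_j)=\level_j(\alpha)(\lambda_j,\alpha_j).
$$
Combining this with the displayed expansion of $(\lambda_{j,x},\alpha)$ above yields the asserted formula.

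I do not anticipate any genuine obstacle here: the statement is an elementary bilinearity computation, and the only ingredient beyond the definitions is the orthogonality relation $(\lambda_j,\alpha_r)=0$ for $r\neq j$, which is a standard and immediate consequence of the definition of the fundamental weights recalled in the Preliminaries. The mild care needed is simply to keep track that $\alpha$ ranges over $\Gamma$ (so that the coefficients $d_r$ are well-defined non-negative integers and $\level_j(\alpha)$ makes sense), which is exactly the hypothesis $\alpha\in\Gamma$ in the statement.
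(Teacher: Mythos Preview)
Your proof is correct and follows essentially the same route as the paper: expand $\lambda_{j,x}=\lambda_{j,0}+x\lambda_j$ by definition, apply bilinearity of $(-,-)$, and then reduce $(\lambda_j,\alpha)$ to $\level_j(\alpha)(\lambda_j,\alpha_j)$ via the orthogonality $(\lambda_j,\alpha_r)=\delta_{rj}(\lambda_j,\alpha_j)$ coming from $\langle\lambda_j,\alpha_r\rangle=\delta_{jr}$. There is nothing to add.
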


\begin{proof}
First notice that $(\lambda_{j,x},\alpha)=(\lambda+(x-a_j)\lambda_j,\alpha) =(\lambda-a_j\lambda_j,\alpha) + x(\lambda_j,\alpha)$ by definition of $\lambda_{j,x}$ and by bilinearity of $(-,-).$ Also $\lambda-a_j\lambda_j$ is obviously obtained from $\lambda$ by replacing $a_j$ by $0,$ i.e. $\lambda-a_j\lambda_j=\lambda_{j,0}.$ Finally, observe that $(\lambda_j,\alpha_r)=\delta_{rj}(\lambda_j,\alpha_j)$ for every $1\leqslant r\leqslant l$ and hence $(\lambda_j,\alpha)=\level_j(\alpha)(\lambda_j,\alpha_j),$   completing the proof.
\end{proof}

The following result can sometimes provide one with a way of  comparing certain weight spaces in different Verma modules. Its proof essentially relying on Theorem \ref{A_description_of_N(lambda)}, we shall adopt the notation introduced in the latter.

\begin{lem}\label{An_isomorphism_from_M(lambda)mu_to_M(lambda,j,x)mu,j,x}
Let $\lambda=\sum_{r=1}^l{a_r\lambda_r}\in \Lambda^+$ be a dominant integral weight and let $\mu\in \Lambda$ be such that $\mu=\lambda-\sum_{r=1}^l{c_r \alpha_r}$ for some $c_1,\ldots,c_l\in \mathbb{Z}_{\geqslant 0},$ so that $\mu\preccurlyeq \lambda.$ Also assume the existence of $1\leqslant j\leqslant l$ satisfying $0<c_j\leqslant a_j$ and let $x\in \mathbb{Z}_{\geqslant c_j}.$  Finally, fix two maximal vectors $v^{\lambda},v^{\lambda_{j,x}}$ in $\Delta(\lambda)_\lambda,$ $\Delta(\lambda_{j,x})_{\lambda_{j,x}},$ respectively. Then there exists an isomorphism of vector spaces $\phi:\Delta(\lambda)_{\mu}\longrightarrow \Delta(\lambda_{j,x})_{\mu^{\lambda_{j,x}}}$ such that 
\begin{equation}
 \phi \left( (\mathfrak{U}(\mathfrak{g})y_r^{a_r+1}v^{\lambda})_{\mu} \right)= (\mathfrak{U}(\mathfrak{g})y_r^{a_r+1}v^{\lambda_{j,x}} )_{\mu^{\lambda_{j,x}}} 
\label{Main_equation_from_M(lambda)mu_to_M(lambda,j,x)mu,j,x}
\end{equation}
for every  $1\leqslant r\leqslant l.$ 
\end{lem}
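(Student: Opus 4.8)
The plan is to construct $\phi$ explicitly on the level of PBW monomials and then verify that it respects the relevant submodule structure. By Lemma \ref{A_basis_for_M(lambda)_mu}, the set $\mathscr{B}(\lambda)_{\mu}$ of \eqref{B(lambda)_mu} is a basis of $\Delta(\lambda)_{\mu}$ and $\mathscr{B}(\lambda_{j,x})_{\mu^{\lambda_{j,x}}}$ is a basis of $\Delta(\lambda_{j,x})_{\mu^{\lambda_{j,x}}}$; since $\lambda - \mu = \lambda_{j,x} - \mu^{\lambda_{j,x}} \in \Gamma$, these two bases are indexed by the \emph{same} set of tuples $(r_1,\ldots,r_m)$ with $\sum_i r_i \gamma_i = \lambda - \mu$. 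So I define $\phi$ to be the linear isomorphism sending $y_1^{r_1}\cdots y_m^{r_m} v^{\lambda} \mapsto y_1^{r_1}\cdots y_m^{r_m} v^{\lambda_{j,x}}$. This is manifestly a vector space isomorphism $\Delta(\lambda)_{\mu} \to \Delta(\lambda_{j,x})_{\mu^{\lambda_{j,x}}}$; the content is entirely in \eqref{Main_equation_from_M(lambda)mu_to_M(lambda,j,x)mu,j,x}.

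To establish \eqref{Main_equation_from_M(lambda)mu_to_M(lambda,j,x)mu,j,x}, fix $1 \leqslant r \leqslant l$ and consider the weight space $(\mathfrak{U}(\mathfrak{g}) y_r^{a_r+1} v^{\lambda})_{\mu}$. By Theorem \ref{A_description_of_N(lambda)}(\ref{description_of_N(lambda):part1})–(\ref{description_of_N(lambda):part2}), $y_r^{a_r+1} v^{\lambda}$ is a maximal vector of weight $\lambda - (a_r+1)\alpha_r$ generating a submodule isomorphic to the Verma module $\Delta(\lambda - (a_r+1)\alpha_r)$; its $\mu$-weight space is therefore spanned (via \eqref{weight spaces in highest weight modules}, equivalently \eqref{B(lambda)_mu} applied to this Verma module) by the vectors $u \cdot y_r^{a_r+1} v^{\lambda}$ where $u$ ranges over PBW monomials in $\mathfrak{U}(\mathfrak{n}^-)$ of weight $\bigl(\lambda - (a_r+1)\alpha_r\bigr) - \mu$. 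Expanding each $u \cdot y_r^{a_r+1} v^{\lambda}$ in the basis $\mathscr{B}(\lambda)_{\mu}$ is achieved by successively applying the standard commutation formulas in $\mathfrak{U}(\mathfrak{g})$ to push the $h_i$'s past the $y_i$'s and to reorder the $y_i$'s into the standard PBW order. Crucially, whenever an $h_i$ acts on the highest weight vector it contributes a scalar depending only on the weight $\lambda$, and in fact — because all the reordering happens within a fixed coset $\lambda - \Gamma$ — on the \emph{intermediate} weights, which differ between the two situations only by the constraint involving $a_j$ versus $x$. This is exactly the phenomenon recorded in Remark \ref{Remark_on_weight_spaces_in_Verma_modules}: for any $y \in \mathfrak{U}(\mathfrak{n}^-)$ the coefficients of $y v^{\lambda}$ in $\mathscr{B}(\lambda)_{\mu}$ and of $y v^{\lambda_{j,x}}$ in $\mathscr{B}(\lambda_{j,x})_{\mu^{\lambda_{j,x}}}$ coincide, so $\phi$ maps $y v^{\lambda}$ to $y v^{\lambda_{j,x}}$ for \emph{all} $y \in \mathfrak{U}(\mathfrak{n}^-)$. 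In particular, for $r \neq j$ we have $a_r + 1 = a_r + 1$ in both settings, so $y_r^{a_r+1} v^{\lambda} \mapsto y_r^{a_r+1} v^{\lambda_{j,x}}$ and, applying $\phi$ to the spanning set just described, $(\mathfrak{U}(\mathfrak{g}) y_r^{a_r+1} v^{\lambda})_{\mu}$ maps onto $(\mathfrak{U}(\mathfrak{g}) y_r^{a_r+1} v^{\lambda_{j,x}})_{\mu^{\lambda_{j,x}}}$.

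The delicate case is $r = j$, where the exponent $a_j + 1$ must be replaced by $x + 1$ on the target side, so $\phi$ does \emph{not} directly send $y_j^{a_j+1} v^{\lambda}$ to $y_j^{x+1} v^{\lambda_{j,x}}$. Here I use the hypotheses $0 < c_j \leqslant a_j$ and $x \geqslant c_j$. The key observation is that the $\mu$-weight space of the submodule generated by $y_j^{a_j+1} v^{\lambda}$ is spanned by $u \cdot y_j^{a_j+1} v^{\lambda}$ with $u$ of weight $(\lambda - (a_j+1)\alpha_j) - \mu = (c_j - a_j - 1)\alpha_j - \sum_{r \neq j} c_r \alpha_r$; since $c_j - a_j - 1 < 0$, every such $u$ already contains at least... — more precisely, each contributing PBW monomial $y_1^{r_1}\cdots y_m^{r_m} v^{\lambda} \in \mathscr{B}(\lambda)_{\mu}$ lying in this submodule has its total $\alpha_j$-content equal to $c_j$, which is $\leqslant a_j < a_j + 1 \leqslant x + 1$ as well as $\leqslant x$; so the identical monomial $y_1^{r_1}\cdots y_m^{r_m} v^{\lambda_{j,x}}$ on the target side lies in $(\mathfrak{U}(\mathfrak{g}) y_j^{x+1} v^{\lambda_{j,x}})_{\mu^{\lambda_{j,x}}}$ precisely when the dual statement holds. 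I would make this rigorous by invoking Lemma \ref{Restriction_to_suitable_Levi_subalgebra} (or a direct $\mathfrak{sl}_2$-argument along $\alpha_j$): restricting to the Levi subalgebra $\mathfrak{g}_{\{j\}} \cong \mathfrak{sl}_2$, the weight space in question depends only on the pairing $\langle \lambda, \alpha_j \rangle = a_j$ through the single relation $y_j^{a_j+1}v = 0$ in $L(\lambda)$, and since $c_j \leqslant \min(a_j, x)$ the passage of $\mathscr{B}(\lambda)_{\mu}$ through this relation is "the same" for $\lambda$ and $\lambda_{j,x}$ — this is where the inequalities $0 < c_j \leqslant a_j$ and $c_j \leqslant x$ are used in an essential way, and where the main work of the proof lies. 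Once the $r = j$ case is settled, \eqref{Main_equation_from_M(lambda)mu_to_M(lambda,j,x)mu,j,x} holds for all $r$ and $\phi$ is the desired isomorphism. I expect the bookkeeping of the commutation formulas in the $r = j$ case to be the principal obstacle; Remark \ref{Remark_on_weight_spaces_in_Verma_modules} handles the $r \neq j$ cases essentially for free.
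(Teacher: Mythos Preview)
Your construction of $\phi$ and your handling of the case $r\neq j$ are essentially the paper's argument: the paper, too, defines $\phi$ on the PBW basis $\mathscr{B}(\lambda)_\mu$ and uses Remark~\ref{Remark_on_weight_spaces_in_Verma_modules} to see that $\phi(u\,y_r^{a_r+1}v^\lambda)=u\,y_r^{a_r+1}v^{\lambda_{j,x}}$ for every relevant $u\in\mathfrak{U}(\mathfrak{n}^-)$ (the paper also records a dimension count via the Kostant function, but your direct argument on spanning sets suffices).

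The genuine gap is your treatment of $r=j$, which you flag as ``the delicate case'' but which is in fact the \emph{trivial} one.  You essentially write down the key inequality --- ``since $c_j-a_j-1<0$'' --- and then abandon it for a vague $\mathfrak{sl}_2$-reduction.  That inequality is the whole point: it says $(\lambda-(a_j+1)\alpha_j)-\mu\notin\Gamma$, so by Theorem~\ref{A_description_of_N(lambda)}(\ref{description_of_N(lambda):part2}) the submodule $\mathfrak{U}(\mathfrak{g})y_j^{a_j+1}v^\lambda\cong\Delta(\lambda-(a_j+1)\alpha_j)$ has \emph{zero} $\mu$-weight space.  The hypothesis $c_j\leqslant x$ gives the same vanishing on the target side.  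Thus \eqref{Main_equation_from_M(lambda)mu_to_M(lambda,j,x)mu,j,x} holds for $r=j$ because both sides are $0$; no appeal to Lemma~\ref{Restriction_to_suitable_Levi_subalgebra} or any $\mathfrak{sl}_2$-bookkeeping is needed.  This is exactly how the paper disposes of the $r=j$ case in two lines.  (Note also that your remark ``$\phi$ does not directly send $y_j^{a_j+1}v^\lambda$ to $y_j^{x+1}v^{\lambda_{j,x}}$'' is moot: $y_j^{a_j+1}v^\lambda$ does not lie in $\Delta(\lambda)_\mu$, so $\phi$ is not even defined on it.)
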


\begin{proof}
Let $1\leqslant j\leqslant l$ and $x\in \mathbb{Z}_{\geqslant c_j}$ be as in the statement of the Lemma.  The set $\mathscr{B}(\lambda)_{\mu}$ as in  \eqref{B(lambda)_mu} forms a basis for $\Delta(\lambda)_{\mu}$ by Lemma \ref{A_basis_for_M(lambda)_mu}, showing the existence of a unique linear map $\phi:\Delta(\lambda)_{\mu}\longrightarrow \Delta(\lambda_{j,x})_{\mu^{\lambda_{j,x}}}$ such that 
\begin{equation}
\phi(y_1^{r_1}\cdots y_m^{r_m} v^{\lambda}) = y_1^{r_1}\cdots y_m^{r_m} v^{\lambda_{j,x}}
\label{Proof_of_iso_between_M(l)_mu_and_M(l,j,x)_mu}
\end{equation}
for every $r_1,\ldots,r_m\in \mathbb{Z}_{\geqslant 0}$ satisfying $\sum_{i=1}^m{r_i\gamma_i} =\lambda-\mu.$ Since the elements on the right-hand  side of \eqref{Proof_of_iso_between_M(l)_mu_and_M(l,j,x)_mu} form a basis for  $\Delta(\lambda_{j,x})_{\mu^{\lambda_{j,x}}}$ by Lemma \ref{A_basis_for_M(lambda)_mu} again, the linear map  $\phi$ is an isomorphism of vector spaces. Now as we assumed $ c_j\leqslant a_j,$ we immediately get that $\mu$ is not under $\lambda-(a_j+1)\alpha_j$ and hence we have
$$
(\mathfrak{U}(\mathfrak{g})y_j^{a_j+1}v^\lambda)_{\mu} =0
$$ 
by Theorem \ref{A_description_of_N(lambda)} (Part \ref{description_of_N(lambda):part2}). Similarly $(\mathfrak{U}(\mathfrak{g})y_j^{a_j+1}v^{\lambda_{j,x}})_{\mu^{\lambda_{j,x}}} =0$ and hence the equality \eqref{Main_equation_from_M(lambda)mu_to_M(lambda,j,x)mu,j,x} holds for $r=j.$ In the remainder of the proof, assume $1\leqslant r\leqslant l$ different from $j$ and set $\delta_r=\lambda-(a_r+1)\alpha_r.$ By Lemma \ref{A_basis_for_M(lambda)_mu} and Theorem \ref{A_description_of_N(lambda)} (Part \ref{description_of_N(lambda):part2}), one successively gets
\begin{align*}
\dim (\mathfrak{U}(\mathfrak{g})y_r^{a_r+1}v^\lambda)_{\mu}
&=	\dim \Delta(\delta_r)_\mu					\cr
	&=	P(\delta_r-\mu)							\cr
						&= 	P\left( \lambda_{j,x}+(\delta_r-\lambda) -(\lambda_{j,x}+\mu-\lambda)\right)									\cr
						&=	P( \delta_r^{\lambda_{j,x}} -\mu^{\lambda_{j,x}})											\cr
						&=	\dim \Delta(\delta_r^{\lambda_{j,x}})_{\mu^{\lambda_{j,x}}}					\cr
&=	\dim (\mathfrak{U}(\mathfrak{g})y_r^{a_r+1}v^{\lambda_{j,x}})_{\mu^{\lambda_{j,x}}},
\end{align*}
and hence it suffices to show that $\phi \left((\mathfrak{U}(\mathfrak{g})y_r^{a_r+1}v^\lambda)_{\mu}\right) \subseteq (\mathfrak{U}(\mathfrak{g})y_r^{a_r+1}v^{\lambda_{j,x}})_{\mu^{\lambda_{j,x}}}.$ Let   $r_1,\ldots,r_m\in \mathbb{Z}_{\geqslant 0}$ be such that $r_1\gamma_1+\cdots + r_m\gamma_m =\mu -(a_r+1)\gamma_r$ and consider $v=y_1^{r_1}\cdots y_m^{r_m}y_r^{a_r+1}v^{\lambda}\in \Delta(\lambda)_{\mu}.$ In addition, write   $\mathscr{B}(\lambda)_{\mu}=\{v_1,\ldots,v_k\},$ so that $\mathscr{B}(\lambda_{j,x})_{\mu^{\lambda_{j,x}}}=\{\phi(v_1),\ldots,\phi(v_k)\}$ and let $\xi_1,\ldots,\xi_m\in \mathbb{C}$ denote the unique complex coefficients satisfying  $v= \xi_1 v_1 + \cdots + \xi_k v_k .$ An application of  Remark \ref{Remark_on_weight_spaces_in_Verma_modules} then yields 
$$
\phi(v)=\phi\left(\sum_{i=1}^k{\xi_iv_i}\right)_{~}= \sum_{i=1}^k{\xi_i\phi(v_i)} = y_1^{r_1}\cdots y_m^{r_m}y_r^{a_r+1}v^{\lambda_{j,x}} \in (\mathfrak{U}(\mathfrak{g})y_r^{a_r+1}v^{\lambda_{j,x}})_{\mu^{\lambda_{j,x}}}.
$$
By Lemma \ref{A_basis_for_M(lambda)_mu} and Theorem \ref{A_description_of_N(lambda)} (Parts \ref{description_of_N(lambda):part1} and \ref{description_of_N(lambda):part2}), any element of  $(\mathfrak{U}(\mathfrak{g})y_r^{a_r+1}v^\lambda)_{\mu}$ can be expressed as a linear combination of vectors such as $v,$ from which the result follows.
\end{proof}

Let $\lambda \in \Lambda^+$ be a dominant integral weight and let $\mu \in \Lambda$ be such that $\mu\prec \lambda.$ Also fix $1\leqslant j\leqslant l$ and  $x\in \mathbb{Z}_{\geqslant 0}.$ Then $\m_{\Delta(\lambda)}(\mu)=P(\lambda-\mu)=P(\lambda_{j,x}-\mu^{\lambda_{j,x}})=\m_{\Delta(\lambda_{j,x})}(\mu^{\lambda_{j,x}})$ by Lemma \ref{A_basis_for_M(lambda)_mu} and applying \eqref{ses} to the exact sequences $0\to \rad(\lambda)\to \Delta(\lambda)\to L(\lambda)\to 0$ and $0\to \rad(\lambda_{j,x})\to \Delta(\lambda_{j,x})\to L(\lambda_{j,x})\to 0$ shows that $\m_{L(\lambda)}(\mu)=\m_{L(\lambda_{j,x})}(\mu^{\lambda_{j,x}})$ if and only if   
\begin{equation}
\m_{\rad(\lambda)}(\mu)=\m_{\rad(\lambda_{j,x})}(\mu^{\lambda_{j,x}}).
\label{sufficient_condition}
\end{equation}
Now since both $\lambda$ and $\lambda_{j,x}$ are dominant integral (as $x\in \mathbb{Z}_{\geqslant 0}$), applying Theorem \ref{A_description_of_N(lambda)} (Part \ref{description_of_N(lambda):part3}) shows that the modules $\rad(\lambda)$ and $\rad(\lambda_{j,x})$ are given by
$$
\rad(\lambda) = \sum_{r=1}^l{\mathfrak{U}(\mathfrak{g})y_r^{a_r+1}v^{\lambda}},\mbox{ }  \rad(\lambda_{j,x}) =  \sum_{r=1}^l{\mathfrak{U}(\mathfrak{g})y_r^{a_r+1}v^{\lambda_{j,x}}}.
$$

Finally, in Lemma  \ref{An_isomorphism_from_M(lambda)mu_to_M(lambda,j,x)mu,j,x}, it was shown that in the case where $1\leqslant j\leqslant l$ satisfies $0<c_j\leqslant a_j$ and $x\in \mathbb{Z}_{\geqslant c_j},$ then there exists an isomorphism of vector spaces $\phi : \Delta(\lambda)_{\mu} \longrightarrow \Delta(\lambda_{j,x})_{\mu^{\lambda_{j,x}}}$ such that the diagram  
\begin{equation*}
\begin{xy}
(0,20)*+{\Delta(\lambda)_{\mu}}="a"; (65,20)*+{\Delta(\lambda_{j,x})_{\mu^{\lambda_{j,x}}}}="b";%
(0,0)*+{\left(\mathfrak{U}(\mathfrak{g})y_r^{a_r+1}v^{\lambda}\right)_\mu}="c"; (65,0)*+{\left(\mathfrak{U}(\mathfrak{g})y_r^{a_r+1}v^{\lambda_{j,x}}\right)_{\mu^{\lambda_{j,x}}}}="d";%
{\ar_{\cong} "a";"b"}?*!/_3mm/{\phi};
{\ar@{^{(}->} "c";"a"};{\ar@{^{(}->} "d";"b"};%
{\ar_{\cong} "c";"d"}?*!/_4mm/{\phi|_{ \left(\mathfrak{U}(\mathfrak{g})y_r^{a_r+1}v^{\lambda}\right)_{\mu}}};%
\end{xy}
\end{equation*}
commutes for every $1\leqslant r\leqslant l$. Therefore \eqref{sufficient_condition} is satisfied thanks to \eqref{(U+W)_mu=?} and hence the following result holds. (Observe that in the case where $c_j=0,$ then the assertion immediately follows from Remark \ref{A_first_simplification_of_J_in_Prop_1}.)

\begin{corol}\label{A_first_simplification} 
Let $\lambda =\sum_{r=1}^l{a_r\lambda_r}\in \Lambda^+$ be a dominant integral weight and let $\mu\in \Lambda$ be such that $\mu=\lambda-\sum_{r=1}^l{c_r \alpha_r}$ for some $c_1,\ldots,c_l\in \mathbb{Z}_{\geqslant 0},$ so that $\mu\preccurlyeq \lambda.$ Also assume the existence of $1\leqslant j\leqslant l$ such that $0 \leqslant c_j\leqslant a_j.$ Then for every  integer $x\in \mathbb{Z}_{\geqslant c_j},$ we have
$$
\m_{L(\lambda)}(\mu)=\m_{L(\lambda_{j,x})}(\mu^{\lambda_{j,x}}).
$$
\end{corol}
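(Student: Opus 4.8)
The strategy is to peel the claim back to an equality of multiplicities in the two radicals $\rad(\lambda)$ and $\rad(\lambda_{j,x})$, and then to read that equality off the vector space isomorphism supplied by Lemma~\ref{An_isomorphism_from_M(lambda)mu_to_M(lambda,j,x)mu,j,x}. As the discussion preceding the statement already indicates, everything has been arranged so that the corollary is essentially a bookkeeping consequence of that lemma together with \eqref{ses} and \eqref{(U+W)_mu=?}.

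First I would dispose of the boundary case $c_j=0$. Here $\alpha_j$ does not occur in $\lambda-\mu$, so a minimal subset $J\subseteq\{1,\ldots,l\}$ with $\mu=\lambda-\sum_{k\in J}c_k\alpha_k$ omits $j$; Remark~\ref{A_first_simplification_of_J_in_Prop_1} then says that $\m_{L(\lambda)}(\mu)$ does not depend on the coefficient $a_j$ of $\lambda$, so replacing $a_j$ by any integer $x\geqslant 0=c_j$ changes neither side. From now on assume $c_j\geqslant 1$, hence $1\leqslant c_j\leqslant a_j$ and $x\geqslant c_j\geqslant 1$.

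Next, since $\lambda_{j,x}-\mu^{\lambda_{j,x}}=\lambda-\mu$ by construction, Lemma~\ref{A_basis_for_M(lambda)_mu} gives $\m_{\Delta(\lambda)}(\mu)=P(\lambda-\mu)=P(\lambda_{j,x}-\mu^{\lambda_{j,x}})=\m_{\Delta(\lambda_{j,x})}(\mu^{\lambda_{j,x}})$. Applying \eqref{ses} to the short exact sequences $0\to\rad(\lambda)\to\Delta(\lambda)\to L(\lambda)\to 0$ and $0\to\rad(\lambda_{j,x})\to\Delta(\lambda_{j,x})\to L(\lambda_{j,x})\to 0$, the target identity becomes equivalent to $\m_{\rad(\lambda)}(\mu)=\m_{\rad(\lambda_{j,x})}(\mu^{\lambda_{j,x}})$, i.e.\ to \eqref{sufficient_condition}. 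Since $x\geqslant 0$, both $\lambda$ and $\lambda_{j,x}$ are dominant integral, so Theorem~\ref{A_description_of_N(lambda)}(\ref{description_of_N(lambda):part3}) yields $\rad(\lambda)=\sum_{r=1}^l\mathfrak{U}(\mathfrak{g})y_r^{a_r+1}v^\lambda$ and $\rad(\lambda_{j,x})=\sum_{r=1}^l\mathfrak{U}(\mathfrak{g})y_r^{a_r+1}v^{\lambda_{j,x}}$ (the exponent in the $j$-th term remaining $a_j+1$, which is harmless). Passing to $\mu$- and $\mu^{\lambda_{j,x}}$-weight spaces and using \eqref{(U+W)_mu=?} repeatedly, we get $\rad(\lambda)_\mu=\sum_{r=1}^l(\mathfrak{U}(\mathfrak{g})y_r^{a_r+1}v^\lambda)_\mu$ and likewise for $\lambda_{j,x}$.

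Finally I would apply Lemma~\ref{An_isomorphism_from_M(lambda)mu_to_M(lambda,j,x)mu,j,x}, whose hypotheses $0<c_j\leqslant a_j$ and $x\in\mathbb{Z}_{\geqslant c_j}$ are precisely what we have arranged. It furnishes a vector space isomorphism $\phi:\Delta(\lambda)_\mu\to\Delta(\lambda_{j,x})_{\mu^{\lambda_{j,x}}}$ with $\phi\bigl((\mathfrak{U}(\mathfrak{g})y_r^{a_r+1}v^\lambda)_\mu\bigr)=(\mathfrak{U}(\mathfrak{g})y_r^{a_r+1}v^{\lambda_{j,x}})_{\mu^{\lambda_{j,x}}}$ for every $r$, by \eqref{Main_equation_from_M(lambda)mu_to_M(lambda,j,x)mu,j,x}. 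Being an isomorphism that sends each summand onto the corresponding summand, $\phi$ carries $\rad(\lambda)_\mu$ onto $\rad(\lambda_{j,x})_{\mu^{\lambda_{j,x}}}$, so these have equal dimension; this is \eqref{sufficient_condition}, which finishes the argument. The only point needing care is that the equality must be checked summand by summand rather than only for the totals, $\rad$ being a sum of submodules, but that is exactly the content of \eqref{Main_equation_from_M(lambda)mu_to_M(lambda,j,x)mu,j,x}, so no genuinely new obstacle arises beyond Lemma~\ref{An_isomorphism_from_M(lambda)mu_to_M(lambda,j,x)mu,j,x} itself.
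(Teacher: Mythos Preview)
Your proposal is correct and follows exactly the paper's own argument: reduce to \eqref{sufficient_condition} via Lemma~\ref{A_basis_for_M(lambda)_mu} and \eqref{ses}, describe both radicals using Theorem~\ref{A_description_of_N(lambda)}(\ref{description_of_N(lambda):part3}), and then read off the equality from the isomorphism of Lemma~\ref{An_isomorphism_from_M(lambda)mu_to_M(lambda,j,x)mu,j,x} together with \eqref{(U+W)_mu=?}, handling $c_j=0$ separately via Remark~\ref{A_first_simplification_of_J_in_Prop_1}. Your parenthetical remark that the $j$-th exponent can harmlessly be taken as $a_j+1$ rather than $x+1$ (both contributions vanish in the relevant weight space since $c_j\leqslant a_j$ and $c_j\leqslant x$) is a nice clarification of a point the paper leaves implicit.
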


We are finally ready to give a proof of Proposition \ref{Main_Result_1}, arguing by induction on the cardinality of the subset $J\subseteq \{1,\ldots,l\}$ defined in the statement of the latter. First observe that in the case where $J$ is a singleton, then an application of Corollary \ref{A_first_simplification} immediately yields the desired result. If on the other hand $|J|>1,$ then fix $j\in J.$ By Corollary \ref{A_first_simplification} again, we get that
\begin{equation}
\m_{L(\lambda)}(\mu)=\m_{L(\lambda_{j,c_j})}(\mu^{\lambda_{j,c_j}}).
\label{Proof_of_Prop_1_ind_step}
\end{equation}
Clearly $\lambda_{j,c_j}\in \Lambda^+,$ $\mu^{\lambda_{j,c_j}} =\lambda_{j,c_j} -\sum_{r=1}^l{c_r\alpha_r}$ and $0\leqslant c_k\leqslant a_k$ for every $k\in K=J-\{j\}.$ In addition, adopting the notation of Theorem \ref{New_Freudenthal's_formula}, one sees that $\lambda_{j,c_j}'=\lambda'$ and $(\mu^{\lambda_{j,c_j}})'=\mu'.$ Therefore 
$$
\m_{L(\lambda_{j,c_j})}(\mu^{\lambda_{j,c_j}})=\m_{L(\lambda')}(\mu')
$$
thanks to our induction assumption, which together with \eqref{Proof_of_Prop_1_ind_step}, completes the proof.

\section{Proof of Theorem \ref{New_Freudenthal's_formula}}     
\label{Section:New_Freudenthal's_formula}					  

In this section, we give a proof of Theorem \ref{New_Freudenthal's_formula}, which basically consists in a modified version of the usual formula of Freudenthal (see Theorem \ref{Freudenthal's formula} below), applicable in certain situations. For $\lambda\in \Lambda^+$ a dominant integral weight and $\mu\prec \lambda,$ set 
$$
\dlm(\lambda,\mu)=2(\lambda+\rho,\lambda-\mu)-||\lambda-\mu||^2,
$$
where $\rho$ denotes the half-sum of all positive roots in $\Phi,$ or equivalently, the sum of all fundamental weights. The following formula, due to Freudenthal, gives a recursive way to compute the multiplicity of $\mu$ in $L(\lambda).$ We refer the reader to \cite[Theorem 22.3]{Humphreys1} for more details.

\begin{thm}[Freudenthal's Formula]\label{Freudenthal's formula}
Let $\lambda\in \Lambda^+$ be a dominant integral weight. Then the multiplicity in $L(\lambda)$ of any weight $\mu\in \Lambda$  is given recursively by  
$$
\dlm(\lambda,\mu) \m_{L(\lambda)}(\mu)=2 \sum_{r=1}^{\infty}{\sum_{\alpha \in \Phi^+}{\m_{L(\lambda)}(\mu+r\alpha) (\mu+r\alpha,\alpha)}}.
$$
\end{thm}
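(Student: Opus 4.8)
The plan is to prove the formula by the classical Casimir-operator argument, computing the trace of the universal Casimir element of $\mathfrak{U}(\mathfrak{g})$ on a single weight space of $L(\lambda)$ in two different ways. First I would fix a nondegenerate invariant symmetric bilinear form on $\mathfrak{g}$ --- concretely the Killing form, which by the discussion in the Integral weights section induces precisely the inner product $(-,-)$ on $E$ --- and choose a basis adapted to it: an orthonormal basis $\{h_i\}$ of $\mathfrak{h}$ together with, for each $\alpha\in\Phi^+,$ root vectors $x_\alpha\in\mathfrak{g}_\alpha$ and $y_\alpha\in\mathfrak{g}_{-\alpha}$ normalized so that $(x_\alpha,y_\alpha)=1.$ With respect to these choices the dual basis is $\{h_i,y_\alpha,x_\alpha\},$ so the Casimir element takes the form $c=\sum_i h_i^2+\sum_{\alpha\in\Phi^+}(x_\alpha y_\alpha+y_\alpha x_\alpha).$ The key structural facts I would establish are that $c$ lies in the center of $\mathfrak{U}(\mathfrak{g})$ (a consequence of invariance of the form), and hence acts on the irreducible module $L(\lambda)$ as a scalar; evaluating this scalar on a maximal vector $v^\lambda$ gives $c_\lambda=(\lambda+\rho,\lambda+\rho)-(\rho,\rho).$ Since $c$ is central it commutes with $\mathfrak{h}$ and therefore preserves each weight space $L(\lambda)_\mu,$ on which it acts as multiplication by $c_\lambda;$ thus $\operatorname{Tr}(c|_{L(\lambda)_\mu})=c_\lambda\,\m_{L(\lambda)}(\mu).$

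Next I would compute the same trace directly, term by term. The Cartan contribution is immediate: each $h_i$ acts on $L(\lambda)_\mu$ by the scalar $\mu(h_i),$ and orthonormality yields $\sum_i\mu(h_i)^2=(\mu,\mu),$ so $\sum_i h_i^2$ contributes $(\mu,\mu)\,\m_{L(\lambda)}(\mu).$ For the root contribution I would fix $\alpha\in\Phi^+$ and set $g(\mu)=\operatorname{Tr}(x_\alpha y_\alpha|_{L(\lambda)_\mu}).$ The relation $[x_\alpha,y_\alpha]=t_\alpha,$ where $t_\alpha\in\mathfrak{h}$ acts on $L(\lambda)_\mu$ by $(\mu,\alpha),$ together with the cyclicity identity $\operatorname{Tr}(y_\alpha x_\alpha|_{L(\lambda)_\mu})=\operatorname{Tr}(x_\alpha y_\alpha|_{L(\lambda)_{\mu+\alpha}})=g(\mu+\alpha),$ yields the recursion $g(\mu)=g(\mu+\alpha)+(\mu,\alpha)\,\m_{L(\lambda)}(\mu).$ Because $L(\lambda)$ is finite-dimensional, $g(\mu+r\alpha)$ vanishes for $r$ large, so telescoping gives $g(\mu)=\sum_{r\geqslant 0}(\mu+r\alpha,\alpha)\,\m_{L(\lambda)}(\mu+r\alpha),$ and hence $\operatorname{Tr}((x_\alpha y_\alpha+y_\alpha x_\alpha)|_{L(\lambda)_\mu})=g(\mu)+g(\mu+\alpha)=(\mu,\alpha)\,\m_{L(\lambda)}(\mu)+2\sum_{r\geqslant 1}(\mu+r\alpha,\alpha)\,\m_{L(\lambda)}(\mu+r\alpha).$

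Summing the root contribution over $\alpha\in\Phi^+$ and using $\sum_{\alpha\in\Phi^+}\alpha=2\rho$ to rewrite $\sum_{\alpha\in\Phi^+}(\mu,\alpha)=2(\mu,\rho),$ I would equate the two expressions for $\operatorname{Tr}(c|_{L(\lambda)_\mu})$ to obtain
$$
\big[(\lambda,\lambda)+2(\lambda,\rho)\big]\m_{L(\lambda)}(\mu)=\big[(\mu,\mu)+2(\mu,\rho)\big]\m_{L(\lambda)}(\mu)+2\sum_{\alpha\in\Phi^+}\sum_{r=1}^{\infty}(\mu+r\alpha,\alpha)\,\m_{L(\lambda)}(\mu+r\alpha).
$$
Transposing the first bracket on the right to the left, the scalar multiplying $\m_{L(\lambda)}(\mu)$ becomes $(\lambda,\lambda)-(\mu,\mu)+2(\lambda-\mu,\rho),$ which a short expansion of $2(\lambda+\rho,\lambda-\mu)-||\lambda-\mu||^2$ identifies with $\dlm(\lambda,\mu);$ this gives exactly the asserted formula.

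The main obstacle, and the part requiring the most care, is the root-space trace computation: getting the commutator and cyclicity bookkeeping right so that the recursion for $g$ carries the correct shift, and justifying the telescoping via finite-dimensionality of $L(\lambda).$ The other delicate point is the scalar-eigenvalue computation for the Casimir, which underlies the whole argument and must be carried out with the normalization of the Killing form fixed once and for all, so that the form it induces on $\mathfrak{h}^*$ agrees with the $(-,-)$ appearing in $\dlm(\lambda,\mu);$ once these are in place, the final identification of the coefficient with $\dlm(\lambda,\mu)$ is a routine expansion.
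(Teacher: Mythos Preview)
The paper does not supply its own proof of this theorem: it merely states Freudenthal's formula and refers the reader to \cite[Theorem 22.3]{Humphreys1} for a proof. Your proposal is exactly the classical Casimir-operator argument carried out in that reference---computing the trace of the universal Casimir element on $L(\lambda)_\mu$ in two ways, telescoping the root-vector contributions, and identifying the resulting scalar with $\dlm(\lambda,\mu)$---so it is correct and coincides with the proof the paper cites.
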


\begin{remark}
Thanks to \cite[Lemma 13.4 (C) and Proposition 21.3]{Humphreys1}, one gets that $\m_{L(\lambda)}(\mu)=0$ if $\dlm(\lambda,\mu)= 0.$ (In particular, this implies that $\dlm(\lambda,\mu)\neq 0$ if $\mu\preccurlyeq \lambda$ is dominant integral.) Therefore Theorem \ref{Freudenthal's formula} provides an effective method for computing weight multiplicities inside a given irreducible finite-dimensional module.
\end{remark}

Write $\mathbb{Q}_{\leqslant 1}[X]$ for the set of all linear polynomials in the indeterminate $X$ with coefficients in $\mathbb{Q}.$
In a similar fashion, let $\mathbb{Q}_1[X]$ denote the subset of all \emph{non-constant} linear polynomials in $\mathbb{Q}_{\leqslant 1}[X].$ The following preliminary result is elementary and could easily be stated in a much more general setting. Nevertheless, it is included in its most simple form for clarity.

\begin{lem}\label{Elementary_result_on_divisibility_between_polynomials}
Let $f\in \mathbb{Q}_{\leqslant 1}[X],$ $g\in \mathbb{Q}_1[X]$ be two linear polynomials (with $g$ non-constant) and assume the existence of a non-zero integer $x \in \mathbb{Z}$ such that $g(x)g(x+1)\neq 0$ and $f(x)g(x)^{-1}=f(x+1)g(x+1)^{-1}\in \mathbb{Z}.$ Then $f$ is an integral multiple of $g.$ In particular, either $f=0$ or $f\in \mathbb{Q}_1[X]$ as well. 
\end{lem}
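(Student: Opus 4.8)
The plan is to reduce everything to the elementary fact that a polynomial of degree at most $1$ having two distinct roots must be identically zero. First I would write $f(X)=aX+b$ and $g(X)=cX+d$ with $a,b,c,d\in\mathbb{Q}$, noting that $c\neq 0$ since $g\in\mathbb{Q}_1[X]$ is non-constant. Set $q:=f(x)g(x)^{-1}$; by hypothesis $q\in\mathbb{Z}$, and moreover $q=f(x+1)g(x+1)^{-1}$ by the assumed equality of the two quotients.

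Next I would introduce the auxiliary polynomial $h:=f-qg\in\mathbb{Q}_{\leqslant 1}[X]$. Clearing denominators in $f(x)=qg(x)$ and $f(x+1)=qg(x+1)$ — which is legitimate precisely because $g(x)g(x+1)\neq 0$ — shows $h(x)=0=h(x+1)$, so $h$ vanishes at the two distinct points $x$ and $x+1$. Since $\deg h\leqslant 1$, this forces $h=0$: a nonzero constant polynomial has no root, and a genuinely linear one has exactly one. Hence $f=qg$ with $q\in\mathbb{Z}$, i.e. $f$ is an integral multiple of $g$.

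Finally, for the ``in particular'' clause: if $q=0$ then $f=0$; if $q\neq 0$ then, $g$ being non-constant, $f=qg$ is non-constant as well, so $f\in\mathbb{Q}_1[X]$.

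I do not expect any genuine obstacle here; the statement is deliberately elementary. The only point deserving a word of care is that $h$ could a priori be a nonzero constant, which is immediately excluded by the mere existence of the root $x$. The hypothesis that $x$ be a \emph{nonzero} integer is not actually needed for the argument above (it is presumably recorded only to match the situation arising in Section \ref{Section:New_Freudenthal's_formula}).
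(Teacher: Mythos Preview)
Your argument is correct. It differs from the paper's proof in method, though both are entirely elementary.

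The paper writes $f=aX+b$, $g=cX+q$ (so beware: its $q$ is the constant term of $g$, not your ratio), cross-multiplies the equality $f(x)g(x+1)=f(x+1)g(x)$, and expands to obtain the single relation $aq-bc=0$. A short case split on whether $a=0$ then yields $f=0$ or $f=(a/c)g$ with $a/c\in\mathbb{Z}$. Your route is more conceptual: you package the hypothesis as ``$h:=f-rg$ has two distinct roots'' (where $r=f(x)g(x)^{-1}\in\mathbb{Z}$) and invoke the degree bound to conclude $h=0$ at once, bypassing any coefficient manipulation. The paper's computation has the minor advantage of exhibiting the integer multiplier explicitly as the ratio of leading coefficients, which is exactly how it is used later in Section~\ref{Section:New_Freudenthal's_formula}; your argument gives the same multiplier implicitly as the common value $f(x)g(x)^{-1}$, which is of course the same thing. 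Your closing remark that the hypothesis $x\neq 0$ is never used is also accurate---neither proof needs it.
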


\begin{proof}
Let $f,g,x$ be as in the statement of the Lemma and write $f=aX+b,$ $g=cX+q,$ where $c\neq 0.$ By assumption, $(ax+b)(c(x+1)+q)=(cx+q)(a(x+1)+b),$ which  one easily sees, translates to $aq-bc=0.$ If $a=0,$ then one immediately gets $bc=0,$ which implies $f=0$ since $g$ is non-constant (in which case the assertion trivially holds). Therefore assume $a\neq 0$ in the remainder of the proof, so that replacing $q$ by $a^{-1}bc$ yields
$$
\frac{f(x)}{g(x)}=\frac{a}{c} \in \mathbb{Z},
$$
that is, $a$ is an integral multiple of $c.$ Substituting $a$ by $rc$ for some non-zero integer $r\in \mathbb{Z},$ one gets that $aq-bc=c(rq-b)=0,$ so that $b=rq$ (as $c\neq 0).$ Therefore $f=rg$ as desired, thus completing the proof.
\end{proof}

The overall structure of the proof of Theorem \ref{New_Freudenthal's_formula} is as follows. First we show the existence of $N\in \mathbb{Z}$ and two non-zero polynomials $f=aX+b\in \mathbb{Q}_{\leqslant 1}[X],$ $g=cX+q\in \mathbb{Q}_1[X]$ such that $\m_{L(\lambda)}(\mu)= f(x)  g(x)^{-1} $ for every $x\in \mathbb{Z}_{\geqslant N},$ where we explicitly determine $a,c\in \Q$ (see Lemmas \ref{The_polynomial_g} and \ref{The_polynomial_f}). By Lemma  \ref{Elementary_result_on_divisibility_between_polynomials}, we then get that $f$ is an integral multiple of $g,$ i.e. $f=\m_{L(\lambda)}(\mu) g,$ and thus the multiplicity of $\mu$ in $L(\lambda)$ equals $ac^{-1},$ from which we easily conclude.

\begin{lem}\label{The_polynomial_g}
Let $\lambda=\sum_{r=1}^l{a_r\lambda_r}\in \Lambda^+$ be a dominant integral weight and let $\mu\in \Lambda$ be such that $\mu = \lambda- \sum_{r=1}^l{c_r\alpha_r} $ for some $c_1,\ldots,c_l\in \mathbb{Z}_{\geqslant 0}.$ Also fix $1\leqslant j\leqslant l$ and set 
\begin{equation}
c=2c_j(\lambda_j,\alpha_j) \in \Q.
\label{explicit_value_of_c}
\end{equation}  
Then adopting the notation introduced in Section  \textnormal{\ref{Proof_of_the_first_result}}, there exists $q\in \mathbb{Q}$ such that for every $x \in \mathbb{Z}_{\geqslant 0},$ the linear polynomial $g=cX + q \in \mathbb{Q}_{\leqslant 1}[X]$ satisfies the equality 
$$
\dlm(\lambda_{j,x},\mu^{\lambda_{j,x}})=g(x).
$$ 
Moreover,  $g\in \mathbb{Q}_1[X]$ if and only if $\level_j(\lambda-\mu)\neq 0,$ or equivalently, if and only if $c_j\neq 0.$
\end{lem}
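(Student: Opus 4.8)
The plan is to compute $\dlm(\lambda_{j,x},\mu^{\lambda_{j,x}})$ directly as a function of $x$ and to read off the coefficients $c$ and $q$. The crucial point is that the ``weight drop'' $\gamma:=\lambda-\mu=\sum_{r=1}^{l}c_r\alpha_r\in\Gamma$ is \emph{unchanged} when one passes from the pair $(\lambda,\mu)$ to $(\lambda_{j,x},\mu^{\lambda_{j,x}})$: by the very definition of $\mu^{\lambda_{j,x}}$ one has $\lambda_{j,x}-\mu^{\lambda_{j,x}}=\lambda-\mu=\gamma$ for every $x\in\mathbb{Z}$. Hence, expanding by bilinearity,
$$
\dlm(\lambda_{j,x},\mu^{\lambda_{j,x}})=2(\lambda_{j,x}+\rho,\gamma)-||\gamma||^2=2(\lambda_{j,x},\gamma)+\bigl(2(\rho,\gamma)-||\gamma||^2\bigr),
$$
and only the term $2(\lambda_{j,x},\gamma)$ depends on $x$.

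Next I would apply Lemma \ref{Another_expression_for_(lambda_j,x,alpha)_for_every_alpha_in_Gamma} with $\alpha=\gamma$, which gives $(\lambda_{j,x},\gamma)=(\lambda_{j,0},\gamma)+x\,\level_j(\gamma)(\lambda_j,\alpha_j)$. Since $\level_j(\gamma)=c_j$ by definition of the $j$-level, substituting back produces
$$
\dlm(\lambda_{j,x},\mu^{\lambda_{j,x}})=\bigl(2c_j(\lambda_j,\alpha_j)\bigr)\,x+\bigl(2(\lambda_{j,0},\gamma)+2(\rho,\gamma)-||\gamma||^2\bigr).
$$
This is exactly the claimed identity $\dlm(\lambda_{j,x},\mu^{\lambda_{j,x}})=g(x)$ with $c=2c_j(\lambda_j,\alpha_j)$ as in \eqref{explicit_value_of_c} and $q:=2(\lambda_{j,0},\gamma)+2(\rho,\gamma)-||\gamma||^2$; since $c\in\mathbb{Q}$ by \eqref{explicit_value_of_c} and all weights involved lie in the $\mathbb{Q}$-form $E_0$ on which the form is $\mathbb{Q}$-valued, we have $q\in\mathbb{Q}$ as well (alternatively, $q=g(0)=\dlm(\lambda_{j,0},\mu^{\lambda_{j,0}})$). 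Note that the argument works verbatim for every $x\in\mathbb{Z}$, in particular for every $x\in\mathbb{Z}_{\geqslant0}$.

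For the final assertion, $g\in\mathbb{Q}_1[X]$ exactly when its leading coefficient $c$ is non-zero. Here $(\lambda_j,\alpha_j)=\tfrac12\langle\lambda_j,\alpha_j\rangle(\alpha_j,\alpha_j)=\tfrac12(\alpha_j,\alpha_j)>0$, using $\langle\lambda_j,\alpha_j\rangle=1$ and positive definiteness of $(-,-)$ on $E$; hence $c=2c_j(\lambda_j,\alpha_j)\neq0$ if and only if $c_j\neq0$, which in turn is equivalent to $\level_j(\lambda-\mu)=c_j\neq0$. As for difficulty, the whole argument is a short bilinearity computation resting on Lemma \ref{Another_expression_for_(lambda_j,x,alpha)_for_every_alpha_in_Gamma}, so I do not expect any real obstacle; the only things requiring a moment's care are correctly isolating the $x$-dependent term and invoking the positivity of $(\lambda_j,\alpha_j)$.
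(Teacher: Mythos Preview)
Your proof is correct and follows essentially the same approach as the paper: both isolate the $x$-dependence via $\lambda_{j,x}-\mu^{\lambda_{j,x}}=\lambda-\mu$, apply Lemma~\ref{Another_expression_for_(lambda_j,x,alpha)_for_every_alpha_in_Gamma} to $(\lambda_{j,x},\lambda-\mu)$, and read off $c$ and $q$. Your explicit computation $(\lambda_j,\alpha_j)=\tfrac12(\alpha_j,\alpha_j)>0$ is a slightly sharper justification than the paper's bare assertion that $(\lambda_j,\alpha_j)\neq0$, but otherwise the arguments coincide.
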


\begin{proof}
Fix $x\in \mathbb{Z}_{\geqslant 0}.$ By definition, we have  $\lambda_{j,x}-\mu^{\lambda_{j,x}}=\lambda-\mu,$ from which one easily deduces that
$$
\dlm  (\lambda_{j,x},\mu^{\lambda_{j,x}}) = 2\left(\lambda_{j,x},\lambda -\mu \right)+(2\rho-(\lambda-\mu),\lambda-\mu).
$$
Now $(\lambda_{j,x},\lambda-\mu)= (\lambda_{j,0},\lambda-\mu) + x \level_j(\lambda-\mu)(\lambda_j,\alpha_j)=(\lambda_{j,0},\lambda-\mu)+c_j(\lambda_j,\alpha_j)x$ by Lemma \ref{Another_expression_for_(lambda_j,x,alpha)_for_every_alpha_in_Gamma} and hence setting $q= \left(2\rho + 2 \lambda_{j,0} -(\lambda-\mu) ,\lambda -\mu \right)$ yields
$$
\dlm (\lambda_{j,x},\mu^{\lambda_{j,x}} ) = cx + q,
$$  
with $c\in \Z$ as in \eqref{explicit_value_of_c}. Since $q\in \mathbb{Q}$ is clearly independent of $x$ and since the latter was arbitrarily chosen, we get that the linear polynomial $g =cX + q\in \mathbb{Q}_{\leqslant 1}[X]$ satisfies   $\dlm (\lambda_{j,x},\mu^{\lambda_{j,x}})=g(x)$ for every $x \in \mathbb{Z}_{\geqslant 0}$ as desired. Finally, notice that $g\in \mathbb{Q}_{\leqslant 1}[X]$  is non-constant if and only if  $c=2c_j(\lambda_j,\alpha_j)\neq 0.$ Since $(\lambda_j,\alpha_j)\neq 0$, the second assertion holds as well. 
\end{proof}

\begin{remark}\label{Remark_on_g}
Observe that in the case where $\level_j(\lambda-\mu) \neq 0$ in Lemma \ref{The_polynomial_g}, then the linear polynomial $g\in \mathbb{Q}_{1}[X]$ is strictly increasing in $X.$
\end{remark}

For $1\leqslant j\leqslant l,$  we define a subset $\Phi^+_j$ of $\Phi^+$ by $\Phi^+_j=  \{\alpha \in \Phi^+ : \level_j(\alpha)>0\}.$ Clearly a positive root $\alpha\in \Phi^+$ belongs to  $\Phi_j^+$ if and only if $\alpha_j$ appears in the decomposition of $\alpha$ as a sum of simple roots, or equivalently, if and only if $(\lambda_j,\alpha)\neq 0.$

\begin{lem}\label{The_polynomial_f}
Let $\lambda =\sum_{r=1}^l{a_r\lambda_r}\in \Lambda^+$ be a dominant integral weight and let $\mu\in \Lambda$ be such that $\mu = \lambda- \sum_{r=1}^l{c_r\alpha_r} $ for some $c_1,\ldots,c_l\in \mathbb{Z}_{\geqslant 0}.$ Also assume the existence of $1\leqslant j\leqslant l$ such that $0<c_j \leqslant a_j$ and set 
\begin{equation}
a=2\sum_{r=1}^{c_j}{\sum_{\alpha\in \Phi_j^+}{\level_j(\alpha)\m_{L(\lambda)}(\mu+r\alpha)}(\lambda_j,\alpha_j)} \in \mathbb{Q}.
\label{explicit_value_of_a}
\end{equation}
Then adopting the notation introduced in Section  \textnormal{\ref{Proof_of_the_first_result}}, there exists $b\in \mathbb{Q}$ such that for every $x \in \mathbb{Z}_{\geqslant c_j},$ the linear polynomial $f=a X + b \in \mathbb{Q}_{\leqslant 1}[X]$ satisfies the equality
$$
2\sum_{r=1}^\infty{\sum_{\alpha\in \Phi^+}{\m_{L(\lambda_{j,x})}(\mu^{\lambda_{j,x}}+r\alpha)(\mu^{\lambda_{j,x}}+r\alpha,\alpha )}}=f(x).
$$
\end{lem}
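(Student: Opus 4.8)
The plan is to analyse directly the expression
\[
S(x):=2\sum_{r=1}^{\infty}\sum_{\alpha\in\Phi^{+}}\m_{L(\lambda_{j,x})}\bigl(\mu^{\lambda_{j,x}}+r\alpha\bigr)\bigl(\mu^{\lambda_{j,x}}+r\alpha,\alpha\bigr),
\]
namely the left-hand side of the asserted equality, and to show that, viewed as a function of the integer $x\geqslant c_{j}$, it is affine-linear with leading coefficient equal to the number $a$ of \eqref{explicit_value_of_a}; the polynomial $f=aX+b$ with $b:=S(c_{j})-ac_{j}\in\Q$ then satisfies $S(x)=f(x)$ for all $x\geqslant c_{j}$, as required. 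Note first that $\lambda_{j,x}\in\Lambda^{+}$ for $x\geqslant c_{j}$, so each $S(x)$ is a well-defined finite sum, and that, writing $\alpha=\sum_{s=1}^{l}d_{s}\alpha_{s}$, one has the elementary identities
\[
\mu^{\lambda_{j,x}}+r\alpha=\mu+r\alpha+(x-a_{j})\lambda_{j}=\lambda_{j,x}-\sum_{s=1}^{l}(c_{s}-rd_{s})\alpha_{s}.
\]
In particular a pair $(r,\alpha)$ can contribute a non-zero term to $S(x)$ only when $c_{s}\geqslant rd_{s}$ for every $s$ (a condition not involving $x$), and then $\level_{j}\bigl(\lambda_{j,x}-(\mu^{\lambda_{j,x}}+r\alpha)\bigr)=c_{j}-r\level_{j}(\alpha)$; we may therefore restrict the double sum to such pairs.

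Next I would isolate, for a fixed contributing pair, the dependence of its term on $x$. Since $(\lambda_{j},\alpha_{s})=\delta_{js}(\lambda_{j},\alpha_{j})$, bilinearity gives $(\lambda_{j},\alpha)=\level_{j}(\alpha)(\lambda_{j},\alpha_{j})$, hence
\[
\bigl(\mu^{\lambda_{j,x}}+r\alpha,\alpha\bigr)=(\mu+r\alpha,\alpha)+(x-a_{j})\level_{j}(\alpha)(\lambda_{j},\alpha_{j}),
\]
which is constant in $x$ exactly when $\alpha\notin\Phi^{+}_{j}$. For the multiplicity factor I would apply Corollary \ref{A_first_simplification} to the weight $\mu+r\alpha=\lambda-\sum_{s}(c_{s}-rd_{s})\alpha_{s}$ with the index $j$: its hypotheses are met because the pair is contributing and $0\leqslant c_{j}-r\level_{j}(\alpha)\leqslant c_{j}\leqslant a_{j}$, and the threshold $c_{j}-r\level_{j}(\alpha)$ on $x$ produced by that corollary is $\leqslant c_{j}$, so for every $x\geqslant c_{j}$ one obtains
\[
\m_{L(\lambda_{j,x})}\bigl(\mu^{\lambda_{j,x}}+r\alpha\bigr)=\m_{L(\lambda)}(\mu+r\alpha).
\]
Consequently each contributing term equals $2\m_{L(\lambda)}(\mu+r\alpha)\bigl[(\mu+r\alpha,\alpha)+(x-a_{j})\level_{j}(\alpha)(\lambda_{j},\alpha_{j})\bigr]$, an affine-linear function of $x$ whose slope $2\m_{L(\lambda)}(\mu+r\alpha)\level_{j}(\alpha)(\lambda_{j},\alpha_{j})$ vanishes when $\alpha\notin\Phi^{+}_{j}$.

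Summing over the finitely many contributing pairs — a set independent of $x$ — now shows that $S$ is affine-linear on $\Z_{\geqslant c_{j}}$ with slope $2\sum\m_{L(\lambda)}(\mu+r\alpha)\level_{j}(\alpha)(\lambda_{j},\alpha_{j})$, the sum taken over contributing pairs with $\alpha\in\Phi^{+}_{j}$. To finish I would identify this slope with $a$: if $\alpha\in\Phi^{+}_{j}$ then $\level_{j}(\alpha)\geqslant 1$, so a contributing pair automatically has $1\leqslant r\leqslant r\level_{j}(\alpha)\leqslant c_{j}$; conversely, the pairs $(r,\alpha)$ with $\alpha\in\Phi^{+}_{j}$ and $1\leqslant r\leqslant c_{j}$ that are \emph{not} contributing satisfy $\mu+r\alpha\not\preccurlyeq\lambda$, so $\m_{L(\lambda)}(\mu+r\alpha)=0$ and they may be adjoined to the sum harmlessly. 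Hence the slope equals $2\sum_{r=1}^{c_{j}}\sum_{\alpha\in\Phi^{+}_{j}}\level_{j}(\alpha)\m_{L(\lambda)}(\mu+r\alpha)(\lambda_{j},\alpha_{j})=a$. The one genuinely delicate point I anticipate is precisely this uniform choice of threshold: one must verify that the weight-dependent lower bound on $x$ handed back by Corollary \ref{A_first_simplification} for the weight $\mu+r\alpha$ — namely $c_{j}-r\level_{j}(\alpha)$ — never exceeds $c_{j}$, so that the single bound $x\geqslant c_{j}$ is simultaneously valid for every term of the finite sum.
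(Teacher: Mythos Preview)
Your proposal is correct and follows essentially the same route as the paper's proof: you expand $(\mu^{\lambda_{j,x}}+r\alpha,\alpha)$ as an affine-linear function of $x$ with slope $\level_j(\alpha)(\lambda_j,\alpha_j)$, invoke the comparison of multiplicities (you cite Corollary~\ref{A_first_simplification}, the paper cites Proposition~\ref{Main_Result_1}, which amounts to the same thing here) to see that each $\m_{L(\lambda_{j,x})}(\mu^{\lambda_{j,x}}+r\alpha)$ is independent of $x\geqslant c_j$, and then restrict the slope sum to $\alpha\in\Phi_j^+$ and $1\leqslant r\leqslant c_j$ using that the remaining terms either have $\level_j(\alpha)=0$ or $\mu+r\alpha\not\preccurlyeq\lambda$. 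The only cosmetic difference is that you organise the argument around ``contributing pairs'' and are more explicit about the uniform threshold $x\geqslant c_j$ than the paper is.
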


\begin{proof}
Let $1\leqslant j\leqslant l$ be as in the statement of the lemma and fix $x\in \mathbb{Z}_{\geqslant c_j}.$ By  bilinearity of $(-,-),$ we have   $(\mu^{\lambda_{j,x}}+r\alpha,\alpha)  = (\lambda_{j,x},\alpha) + (\mu-\lambda+r\alpha,\alpha)$ and hence Lemma \ref{Another_expression_for_(lambda_j,x,alpha)_for_every_alpha_in_Gamma} yields
$$
(\mu^{\lambda_{j,x}}+r\alpha,\alpha)= \level_j(\alpha) (\lambda_j,\alpha_j)x  + (\lambda_{j,0} + \mu-\lambda+r\alpha,\alpha).
$$ 
On the other hand, an application of Proposition \ref{Main_Result_1} shows that $\m_{L(\lambda_{j,x})}(\mu^{\lambda_{j,x}}+r\alpha)=\m_{L(\lambda)}(\mu+ r\alpha)$ for every $r\in \mathbb{Z}_{\geqslant 0}$ and every $\alpha \in \Phi^+.$ Setting $b= 2\sum_{r=1}^{\infty}{\sum_{\alpha \in \Phi^+}{\m_{L(\lambda)}(\mu+r\alpha)(\lambda_{j,0} + \mu-\lambda+r\alpha,\alpha)}}$ then yields
$$
2\sum_{r=1}^{\infty}{\sum_{\alpha \in \Phi^+ }{\m_{L(\lambda_{j,x})}(\mu^{\lambda_{j,x}}+r\alpha)(\mu^{\lambda_{j,x}}+r\alpha,\alpha)}} = 2\left(\sum_{r=1}^{\infty}{\sum_{\alpha \in \Phi^+ }{\m_{L(\lambda)}(\mu + r\alpha )\level_j(\alpha)(\lambda_j,\alpha_j)}}\right)x +b.
$$ 
Now $\level_j(\alpha)=0$ if $\alpha \notin \Phi^+_j,$ and if $r>c_j$ and $\alpha\in \Phi^+_j,$ then $\mu+r\alpha$ is not under $\lambda.$  Therefore we have
$$
2\sum_{r=1}^{\infty}{\sum_{\alpha \in \Phi^+ }{\m_{L(\lambda_{j,x})}(\mu^{\lambda_{j,x}}+r\alpha)(\mu^{\lambda_{j,x}}+r\alpha,\alpha)}} = ax +b,
$$ 
with $a\in \mathbb{Q}$ as in \eqref{explicit_value_of_a}. Finally, since $b\in \mathbb{Q}$ is independent of $x$ and since the latter was arbitrarily chosen, we get that the linear polynomial $f =a X + b \in \mathbb{Q}_1[X]$ satisfies the desired condition.
\end{proof}

We are now able to give a proof of Theorem \ref{New_Freudenthal's_formula}. Let $\lambda,\mu,j$ be as in the statement of the latter and first observe that $\m_{L(\lambda)}(\mu)=\m_{L(\lambda_{j,x})}(\mu^{\lambda_{j,x}})$ for every $x\in \mathbb{Z}_{\geqslant c_j}$ by Proposition \ref{Main_Result_1}. Now thanks to Lemma \ref{The_polynomial_g} and Remark \ref{Remark_on_g}, we get the existence of a linear polynomial $g=cX+q\in \mathbb{Q}_{1}[X]$ as well as $N\in \mathbb{Z}_{\geqslant 0}$ such that $g(x)=\dlm(\lambda_{j,x},\mu^{\lambda_{j,x}})\neq 0$ for   $x\geqslant \max \{c_j,N\}.$ An application of Theorem \ref{Freudenthal's formula} then yields 
$$
\m_{L(\lambda)}(\mu) = \frac{2}{g(x)}\sum_{r=1}^\infty{\sum_{\alpha\in \Phi^+}{\m_{L(\lambda_{j,x})}(\mu^{\lambda_{j,x}}+r\alpha)(\mu^{\lambda_{j,x}}+r\alpha,\alpha )}}
$$
for every $x\geqslant \max \{c_j,N\},$ which by Lemma \ref{The_polynomial_f} translates to the existence of  $f=aX+b\in \mathbb{Q}_{\leqslant 1}[X]$ such that $\m_{L(\lambda)}(\mu)=f(x)g(x)^{-1} $ for every $x\geqslant \max \{c_j,N\}.$ Consequently  $f$ is an integral multiple of $g$ by  Lemma  \ref{Elementary_result_on_divisibility_between_polynomials} (that is, $f=\m_{L(\lambda)}(\mu)g$),  and thus $\m_{L(\lambda)}(\mu)=ac^{-1},$ where $c,a\in \Q$ are given by \eqref{explicit_value_of_c} and \eqref{explicit_value_of_a}, respectively. The result then immediately follows.

\section{Proof of Proposition \ref{Application_to_type_A}}\label{3rd_pf}     

Let $\mathfrak{g}$ be a simple Lie algebra of type $A_l$ $(l\geqslant 2)$ over $\mathbb{C}.$ For  $  \lambda=\sum_{r=1}^l{a_r\lambda_r} \in \Lambda^+$ a non-zero dominant integral weight,  define $I_{\lambda} =\{r_1,\ldots,r_{N_{\lambda}}\}$ to be maximal in $\{1,\ldots,l\}$ such that $r_1<\ldots <r_{N_{\lambda}}$ and $\prod_{r\in I_{\lambda}}{a_r}\neq 0.$ Also let $\mu=\lambda-(\alpha_1+\cdots+\alpha_l)$ and observe that $\mu$ is $\mathscr{W}$-conjugate to $\lambda-(\alpha_{r_1}+\cdots+\alpha_{r_{N_\lambda}}).$ Therefore 
$$
\m_{L(\lambda)}(\mu)=\m_{L(\lambda)}(\lambda-(\alpha_{r_1}+\cdots+\alpha_{r_{N_\lambda}})),
$$
and in the special case where $N_\lambda=1,$ one immediately deduces that $\m_{L(\lambda)}(\mu)=1.$ Hence  we assume $a_1a_l\neq 0$ in the remainder of this section (so that  $r_1=1,$   $r_{N_{\lambda}}=l,$  and $N_{\lambda}\geqslant 2),$ and aim at showing that the multiplicity of $\mu=\lambda-(\alpha_1+\cdots + \alpha_l)$ in $L(\lambda)$ is given by
\begin{equation}
\m_{L(\lambda)}(\mu)=\prod_{i=2}^{N_{\lambda}}{(r_i-r_{i-1}+1}).
\label{Formula_that_needs_proving}
\end{equation}

We proceed by induction on $N_{\lambda},$ starting by considering the case where $N_{\lambda}=2,$ that is, $I_{\lambda}=\{1,l\}$ and $\lambda=a \lambda_1+b\lambda_l$ for some $a ,b\in \Z_{>0}.$ Even though the result is well-known in this situation, we record an argument here for completeness. Since $\mu=\lambda-(\alpha_1+\cdots + \alpha_l),$ one can apply  Theorem \ref{New_Freudenthal's_formula} to $j=1$ (as $0<1\leqslant a$),  yielding 
$$
\m_{L(\lambda)}(\mu)  = \sum_{\alpha\in \Phi^+_1}{\level_1(\alpha)\m_{L(\lambda)}(\mu+\alpha)} =\sum_{r=1}^{l}{\m_{L(\lambda)}(\mu+\alpha_1+\cdots+\alpha_r)}. 
$$
Now observe that $\mu+\alpha_1+\cdots +\alpha_l=\lambda,$ while $\mu+\alpha_1+\cdots + \alpha_r$ is $\mathscr{W}$-conjugate to $\lambda-\alpha_l$ for every $1\leqslant r <l.$ Therefore $\m_{L(\lambda)}(\mu+\alpha_1+\cdots+\alpha_r)=1$ for every $1\leqslant r\leqslant l$ and hence $\m_{L(\lambda)}(\mu)=l = l-1+1,$ i.e. \eqref{Formula_that_needs_proving} holds. Next assume the existence of $N\in \Z_{>2}$ such  that  \eqref{Formula_that_needs_proving} holds whenever $2\leqslant N_{\lambda}<N,$ and let $\lambda \in \Lambda^+$ be such that $N_{\lambda}=N.$  As in the previous situation, an application of Theorem \ref{New_Freudenthal's_formula} to $j=1$ (as $0<1\leqslant a_1$) yields 
\begin{equation}
\m_{L(\lambda)}(\mu)=\sum_{r=1}^l{\m_{L(\lambda)}(\mu+\alpha_1+\cdots+\alpha_r)}.
\label{[Application]_First_step}
\end{equation}
Notice that for every $1\leqslant r\leqslant r_2-1,$ the weight $\mu+\alpha_1+\cdots + \alpha_r$ is $\mathscr{W}$-conjugate to $\mu+\alpha_1+\cdots + \alpha_{r_2-1},$ so that $\m_{L(\lambda)}(\mu+\alpha_1+\cdots + \alpha_r) = \m_{L(\lambda)}(\mu+\alpha_1+\cdots + \alpha_{r_2-1}).$ Consequently \eqref{[Application]_First_step} becomes
\begin{align}
\m_{L(\lambda)}(\mu)		&= \sum_{r=1}^{r_2-1}{\m_{L(\lambda)}(\mu+\alpha_1+\cdots + \alpha_r)} + \sum_{r=r_2}^{l}{\m_{L(\lambda)}(\mu+\alpha_1+\cdots + \alpha_r)}\cr
						&= (r_2-1)\m_{L(\lambda)}(\mu+\alpha_1+\cdots + \alpha_{r_2-1})+ \sum_{r=r_2}^{l}{\m_{L(\lambda)}(\mu+\alpha_1+\cdots + \alpha_r)}.  
\label{[Application]Second_step}
\end{align}
Now consider the subset $J=\{r_2,r_2+1,\ldots,l\}$ of $\{1,\ldots,l\}$ and let $\mathfrak{g}_J,$ $\mathfrak{h}_J$ be as in Section \ref{Proof_of_the_first_result}. For every $1\leqslant r\leqslant l-r_2+1,$ write $\beta_r=\alpha_{r_2-1+r},$ so that $\Pi_J=\{\beta_1,\ldots,\beta_{l-r_2+1}\}$ forms a base for $\Phi_J,$ and also denote by $\omega_1,\ldots,\omega_{l-r_2+1}$ the corresponding fundamental weights. We then get the restrictions  $\omega=\lambda|_{\mathfrak{h}_J}=\sum_{r=1}^{l-r_2+1}{a_{r_2+r-1}\omega_r}$ and  $\nu=(\mu+\alpha_1+\cdots +\alpha_{r_2-1})|_{\mathfrak{h}_J}=\omega-(\beta_1+\cdots +\beta_{l-r_2+1}).$ Also by Lemma \ref{Restriction_to_suitable_Levi_subalgebra}, we see that \eqref{[Application]Second_step} translates to
$$
\m_{L(\lambda)}(\mu )=(r_2-1)\m_{L(\omega)}(\nu)+\sum_{r=1}^{l-r_2+1}{\m_{L(\omega)}(\nu+\beta_1+\cdots+\beta_r)},
$$
where $L(\omega)$ denotes the irreducible $\mathfrak{g}_J$-module having highest weight $\omega.$ Now a suitable application of Theorem \ref{New_Freudenthal's_formula} shows that the sum on the right-hand side equals $\m_{L(\omega)}(\nu),$ so that 
$$
\m_{L(\lambda)}(\mu)	= r_2\m_{L(\omega)}(\nu).
$$

Finally, observe that $I_{\omega}=\{s_i: 1\leqslant i\leqslant N_{\lambda}-1\}$, where for every $1\leqslant i\leqslant N_{\lambda}-1,$ we have $s_i=r_{i+1}-r_2+1.$
In particular $N_{\omega}=N_{\lambda}-1<N_{\lambda}$ and thus our induction assumption applies, yielding 
$$
\m_{L(\omega)}(\nu)= \prod_{i=2}^{N_{\omega}}{(s_i-s_{i-1}+1)} = \prod_{i=2}^{N_{\lambda}-1}{(r_{i+1}-r_{i}+1)} = \prod_{i=3}^{N_{\lambda}}{(r_i-r_{i-1}+1)},
$$ 
from which the desired result follows.

\section*{Acknowledgements}     

I would like to express my deepest thanks to Professor Donna M. Testerman, for her very helpful comments and suggestions on the earlier versions of this paper.

\bibliographystyle{amsalpha}     

\bibliography{References}  
\end{document}